\documentclass{amsart}
\usepackage{amsmath}
\usepackage{amsfonts}
\usepackage{amsthm}
\usepackage{amssymb}
\usepackage{enumerate}
\usepackage[all]{xy}
\usepackage{cite}
\usepackage{mathrsfs}
\usepackage{color}
\usepackage{xcolor}
\usepackage{hyperref}
\usepackage{fancyhdr}
\hypersetup{
	colorlinks=true,
	anchorcolor=blue,
	linkcolor=blue,
	filecolor=blue,
	urlcolor=blue,	
	citecolor=blue,
	bookmarksopen=true,
	pdfborder={0 0 0}
}
\numberwithin{equation}{section}
\theoremstyle{plain}
\newtheorem{thm}{Theorem}[section]

\newtheorem{proposition}[thm]{Proposition}

\newtheorem{cor}[thm]{Corollary}

\newtheorem{lemma}[thm]{Lemma}
\newtheoremstyle{noparens}%
 {}{}%
 {\itshape}{}%
 {\bfseries}{.}%
 { }%
 {\thmname{#1}\thmnumber{ #2}\mdseries\thmnote{#3}}
\theoremstyle{noparens}

\theoremstyle{definition}
\newtheorem{defn}[thm]{Definition}

\theoremstyle{remark}
\newtheorem{rmk}[thm]{Remark}

\makeatletter

\newcommand{\Rmnum}[1]{\expandafter\@slowromancap\romannumeral #1@}

\newcommand{\B}{\mathbb B}
\newcommand{\C}{\mathbb{C}}
\newcommand{\R}{\mathbb R}
\newcommand{\Aut}{\operatorname{Aut}}\newcommand{\PSH}{\operatorname{PSH}}
\newcommand{\Vol}{\operatorname{Vol}}
\newcommand{\cL}{\mathcal L}
\newcommand{\Imag}{\operatorname{Im}}

\makeatother
\begin{document}
\title{Finite volume quotients of bounded domains with piecewise smooth boundary}
\author{Xingsi Pu\textsuperscript{1} $\&$ Lang Wang\textsuperscript{2} }
\address{$1.$ Mathematical Science Research Center, Chongqing University of Technology, Chongqing, 400054, China}
\address{$2.$ School of Mathematical Sciences, Guizhou Normal University, Guiyang, 550025, P.R. China.}
\email{puxs@cqut.edu.cn,\:wanglang2020@amss.ac.cn}

\subjclass[2020]{32T27, 32T40, 32M05}
\keywords{piecewise smooth, PSH peak, finite volume, products of balls}

\begin{abstract}
We prove that a bounded pseudoconvex domain with piecewise
$C^2$-smooth boundary which has a finite volume quotient for the
complete K\"ahler--Einstein metric is biholomorphic to a product of
complex unit balls. More generally, the same conclusion holds for a
bounded domain possessing a boundary point satisfying Kodama's
conditions (C.1)--(C.5), provided the quotient has finite volume for
a complete $\Aut(D)$-invariant K\"ahler metric. For piecewise Levi-flat boundary the product is a
polydisk and in the smooth pseudoconvex case it is a single ball.
\end{abstract}

\maketitle
\section{Introduction}

A central problem in several complex variables is to understand how
boundary geometry and automorphisms determine a bounded domain.
The Wong--Rosay theorem characterizes the ball by an automorphism
orbit accumulating at a strongly pseudoconvex boundary point
\cite{W77,R79}. At a nonsmooth boundary point several hypersurface
faces may meet, and products of balls are natural models.
Pinchuk developed the relevant piecewise smooth boundary theory
\cite{pinchuk2,pinchuk} and generalized this characterization to products of balls under the assumption of homogeneity.
Subsequently, Kodama obtained a local realization
theorem using the conditions (C.1)--(C.5) recalled below \cite{K86,KodamaII}.

For convex domains, Frankel proved that a compact quotient forces
the domain to be bounded symmetric \cite{fr}. For generic convex domains with piecewise Levi flat boundary, Kim \cite{pf} characterized those possessing a non-compact automorphism group, and Fu-Wong \cite{fw} later extended this to two-dimensional simply connected generic domains with piecewise smooth Levi flat boundary. 
Zimmer established
the ball characterization for smoothly bounded pseudoconvex domains with
finite-volume quotients \cite{finite}.

In our previous work \cite{PW26}, we obtain a bounded generic convex domain with piecewise $C^2$-smooth boundary but not smooth in $\mathbb{C}^2$ which has finite volume is biholomorphic to the bidisk.
Recently, Kuai--Liu--Wu \cite{KLW26} prove that a bounded convex domain that admits a finite volume quotient is biholomorphic to a bounded symmetric domain, so is a homogeneous domain. By the result of Pinchuk \cite{pinchuk2},
it must be the products of balls under the condition of piecewise $C^2$-smooth boundary. 
In this paper, we drop the convexity assumption and get the following main result.

\begin{thm}\label{ab}
Let $D\subset\C^n$, $n\ge2$, be a bounded pseudoconvex
domain with piecewise $C^2$-smooth boundary. Let
$\Gamma\le\Aut(D)$ be discrete and act freely. If $\Gamma\backslash D$ has finite volume for
the complete canonical K\"ahler--Einstein metric, then
\[
D\simeq\B^{n_1}\times\cdots\times\B^{n_k},
\quad \sum_{j=1}^k n_j=n.
\]
\end{thm}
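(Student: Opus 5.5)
The plan follows Zimmer's strategy for finite-volume quotients, adapted to a boundary point where several faces meet, and then reduces to Kodama's local realization theorem and Pinchuk's homogeneity result.

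\textbf{Step 1 (non-compactness and orbit accumulation).} Since $D$ is bounded pseudoconvex, the Cheng--Yau / Mok--Yau complete K\"ahler--Einstein metric $g_{KE}$ exists. It is $\Aut(D)$-invariant, so $\Gamma$ acts by isometries and $\Gamma\backslash D$ is a complete Riemannian manifold of finite volume. If $\Gamma\backslash D$ is compact, then every boundary point is an orbit accumulation point. If it is non-compact, I would use finite volume together with completeness to produce points $z_m\in D$ with $z_m\to\partial D$ whose injectivity radius, or more precisely the volume of the unit $g_{KE}$-ball around them, is bounded below. A comparison of $g_{KE}$ with the Kobayashi or Bergman metric, plus the bounded-geometry estimates for $g_{KE}$ on bounded pseudoconvex domains, gives a recurrence statement: there are a compact set $K\subset D$ and $\gamma_m\in\Gamma$ with $\gamma_m^{-1}z_m\in K$ infinitely often. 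This is the Zimmer-type argument; finiteness of volume forces infinitely many disjoint $g_{KE}$-balls of definite volume to be identified. The outcome is that $\Aut(D)$ has an orbit accumulating at some $p\in\partial D$. Moreover, finite volume lets me choose $p$ generically: the accumulation set of the orbit of a point is large, and I want $p$ to be a point where the faces $\{\rho_j=0\}$ through $p$ meet in general position.

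\textbf{Step 2 (verifying Kodama's conditions (C.1)--(C.5) at $p$).} At such a point, piecewise $C^2$-smoothness gives local defining functions $\rho_1,\dots,\rho_k$ with $\partial\rho_1\wedge\cdots\wedge\partial\rho_k\ne0$. I need each face to be either strongly pseudoconvex or Levi flat near $p$, and I need a PSH peak function at $p$. Pseudoconvexity of $D$ makes each face weakly pseudoconvex. To upgrade this to the normal form in (C.1)--(C.5), I would use finite volume a second time. Zimmer's key lemma is that the orbit accumulates at a set of boundary points that is not contained in a lower-dimensional stratum. Hence one can pick $p$ on each face near a point of the intersection stratum where the Levi form has locally constant rank. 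A rescaling argument, in which a limit domain is homogeneous and contains no complex lines transversal to the faces, then forces the rank to be full or zero on each face. This step, controlling the Levi-form rank on each face at a generic accumulation point, is where I expect the main obstacle. I would first try to show that the set of points where the Levi form is degenerate but nonzero has empty interior in the accumulation set, and that accumulation occurs on an open set of the relevant stratum, by the ergodicity or recurrence of $\Gamma$ on the finite-volume quotient.

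\textbf{Step 3 (conclusion).} With (C.1)--(C.5) verified at an orbit accumulation point, Kodama's theorem \cite{K86,KodamaII} shows that $D$ is biholomorphic to a product $\B^{n_1}\times\cdots\times\B^{n_k}$, possibly after first showing that $D$ is homogeneous and then applying Pinchuk \cite{pinchuk2}. The Levi-flat faces give the factors with $n_j=1$. Because the product is locally determined by the faces at $p$ and $D$ is connected and bounded, the local realization extends to a global biholomorphism. For this extension I would rescale along the orbit $\gamma_m$ so that $D$ becomes an increasing union of images converging to the local model, and then apply the standard Pinchuk scaling normal-families argument.
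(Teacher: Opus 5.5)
There is a genuine gap, and it comes from running Steps~1--2 in the wrong order. At best, Step~1 shows that some $\Gamma$-orbit accumulates at \emph{some} point of $\partial D$, which is cheap anyway since $\Gamma$ is infinite and acts properly. Kodama's Theorem~I, however, needs accumulation at a point where (C.1)--(C.5) hold. Step~2 then tries to certify these conditions at a ``generic'' accumulation point, via a Levi-rank dichotomy obtained from rescaling and ergodicity. Nothing in the proposal proves this dichotomy. It also aims at the wrong target: (C.4)--(C.5) do not require each face to be strongly pseudoconvex or Levi flat. They only require that $\cL_{\rho_i}(p;\xi,\xi)\ge0$ for $\xi\in\bigcap_j\ker\partial\rho_j(p)$, and that $\sum_j\rho_j+A\sum_j\rho_j^2$ be strictly plurisubharmonic.

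The missing idea is to fix the point first and then force the orbit to reach it.
\begin{enumerate}
\item Pinchuk's lemma provides a point $p\in\partial D$ satisfying (C.1)--(C.5) on any bounded pseudoconvex domain with piecewise $C^2$-smooth boundary.
\item From (C.5) and a second-order Taylor expansion you get a pluriharmonic $h=\Re F$ with $h\le-c_0|z-p|^2$ on $\overline D$ near $p$. This yields a continuous PSH peak function $u$ at $p$ with $-1\le u<0$.
\item The envelope $V=(\sup_{\gamma\in\Gamma}u\circ\gamma)^*$ is PSH, bounded and $\Gamma$-invariant, so it descends to $\Gamma\backslash D$.
\item There, a bounded PSH function must be constant, by the Caccioppoli-type Liouville argument for complete K\"ahler manifolds of finite volume. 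Hence $V\equiv0$.
\item The uniform negativity of $u$ away from $p$, combined with Kobayashi-distance control, then gives $\gamma_\nu q\to p$ for every $q\in D$.
\end{enumerate}
No genericity or Levi-rank analysis is needed.

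Step~3 contains a second gap. Kodama's Theorem~I only gives $D\simeq\mathcal D(\R_+^k,H)$, and such a Siegel domain is in general not a product of balls. For instance, $\{\Imag z_1>|w|^2,\ \Imag z_2>|w|^2\}\subset\C^3$ is not linearly equivalent to any $\mathcal E(n_1,\dots,n_k)$, hence by Kaup--Matsushima--Ochiai not biholomorphic to one. You sense that homogeneity is needed, but your scaling and normal-families step only reproduces the Siegel model and supplies no homogeneity.

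The paper uses the finite volume a second time here:
\begin{enumerate}
\item By the lattice lemma of Kuai--Liu--Wu, $\Lambda\cap\Aut_0(S)$ is a lattice in $G_0=\Aut_0(S)$ for the transported group $\Lambda$, so $G_0$ is unimodular.
\item For the grading of $\mathfrak g=\operatorname{Lie}(G_0)$ by the Euler field $E$, the identity $0=\operatorname{tr}(\operatorname{ad}E)=-k-m+\frac12\dim\mathfrak g_{1/2}+\dim\mathfrak g_1$ forces $\dim\mathfrak g_{1/2}=2m$ and $\dim\mathfrak g_1=k$.
\item Vey's argument then shows that $G_0$ acts transitively.
\end{enumerate}
Only after this does Kodama's corollary for the homogeneous case give $S\simeq\prod_j\B^{n_j}$.
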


To prove the above theorem, we need to establish the following more general result.

\begin{thm}\label{fv}
Let $D\subset\C^n$, $n\ge2$, be bounded and possess a  boundary point
$p$ satisfying (C.1)--(C.5) with $k$ local defining functions.
Suppose that $D$ carries a complete $\Aut(D)$-invariant K\"ahler
metric $g$ and that a discrete group $\Gamma\le\Aut(D)$ acts freely
 with $\Vol_g(\Gamma\backslash D)<\infty$.
Then
\[
D\simeq\prod_{j=1}^k\B^{n_j},\quad\sum_j n_j=n.
\]
\end{thm}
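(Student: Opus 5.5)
The plan is to follow Zimmer's strategy for finite-volume quotients \cite{finite}, with Kodama's local realization theorem \cite{K86,KodamaII} in place of the Wong--Rosay theorem. Kodama's theorem says: if $p\in\partial D$ satisfies (C.1)--(C.5) with $k$ local defining functions and some $\Aut(D)$-orbit accumulates at $p$, then $D\simeq\prod_{j=1}^k\B^{n_j}$. So the whole task is to produce, from the finite-volume hypothesis, a point $q\in D$ and automorphisms $\varphi_\nu\in\Aut(D)$ with $\varphi_\nu(q)\to p$. Kodama then gives the product, and the indices $n_j$ are read off from the defining functions at $p$.

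The first step is to build a sequence in $D$ converging to $p$ along which the quotient geometry stays bounded. Let $\pi\colon D\to\Gamma\backslash D$ be the projection. Since $g$ is complete and $\Vol_g(\Gamma\backslash D)<\infty$, the finite-volume quotient has a "thick part" that captures essentially everything. I would argue in two cases.

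\textbf{Case 1.} There is a sequence $z_\nu\to p$ with $\pi(z_\nu)$ staying in a compact set $K\subset\Gamma\backslash D$. Choose a compact fundamental piece $\widetilde K\subset D$ with $\pi(\widetilde K)\supset K$. Then there are $\gamma_\nu\in\Gamma$ and $w_\nu\in\widetilde K$ with $\gamma_\nu(w_\nu)=z_\nu$. Pass to a subsequence with $w_\nu\to w\in D$. The $\gamma_\nu$ are $g$-isometries, so
\[
d_g(\gamma_\nu(w),z_\nu)=d_g(w,w_\nu)\to0 .
\]
Since $g$ is complete, points at bounded $g$-distance from $z_\nu\to\partial D$ also tend to the boundary, and one checks that $\gamma_\nu(w)\to p$ in Euclidean terms. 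To control Euclidean distance by $g$-distance near $p$, I would compare $g$ with the Kobayashi metric, which is at least as large up to a constant after rescaling. Alternatively, I would use the local peak functions guaranteed by (C.1)--(C.5) to get Kobayashi localization at $p$.

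\textbf{Case 2.} Every sequence tending to $p$ eventually leaves each compact set of the quotient. Then $\pi$ maps a neighbourhood $U\cap D$ of $p$ into the ends ("cusps") of $\Gamma\backslash D$. This case must be ruled out.

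The main obstacle is Case 2, and I would handle it as Zimmer does, using volume. Since $g$ is complete and $\Aut(D)$-invariant, a $g$-ball of fixed radius $r$ about any $z\in D$ is embedded in the quotient unless the injectivity radius is small there. So the argument splits according to the injectivity radius at $\pi(z)$.

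\emph{Subcase 2a: injectivity radius bounded below near $p$.} Then the quotient contains infinitely many disjoint embedded balls $\pi(B_g(z_\nu,r))$ with $z_\nu\to p$. I need a uniform lower bound on $\Vol_g(B_g(z,r))$ for $z$ near $p$. For this I would use the local model near $p$ from (C.1)--(C.5): after a linear change of coordinates, $D$ near $p$ is squeezed between two product-like domains. Comparing $g$ with the invariant metrics of these model domains (via the Schwarz lemma, or comparability of complete Kähler metrics with bounded geometry) should give such a bound. Infinitely many disjoint balls of volume $\geq c>0$ then contradict finite volume.

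\emph{Subcase 2b: injectivity radius tends to $0$.} Then there are nontrivial $\gamma_\nu\in\Gamma$ with $d_g(z_\nu,\gamma_\nu z_\nu)\to0$. The maps $\gamma_\nu$ themselves push interior points toward $p$: take a normal-families limit of $\gamma_\nu$, using the Kobayashi localization at $p$. Either the $\gamma_\nu$ accumulate to produce an orbit converging to $p$, which is what we want, or they converge to an automorphism, contradicting discreteness and freeness of the action.

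Either way, some orbit $\varphi_\nu(q)$ accumulates at $p$, and Kodama's theorem finishes the proof. I expect the delicate points to be the uniform volume and Euclidean-distance comparisons near the nonsmooth point $p$. These are where (C.1)--(C.5) must be used quantitatively, rather than just through the conclusion of Kodama's theorem.
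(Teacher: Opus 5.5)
Your proposal has a genuine gap at the last step, independent of how orbit accumulation is obtained: Kodama's theorem does not say what you quote. Theorem~\ref{thm:kodama} (Theorem~I of \cite{K86}) gives only $D\simeq\mathcal D(\R_+^k,H)$. Its converse half says that \emph{every} Siegel domain $\mathcal D(\R_+^k,H)$ is realized by a bounded domain satisfying (C.1)--(C.5) and $(*)$. Most of these are not products of balls; for example, $\{(z,w)\in\C^2\times\C:\Imag z_1>|w|^2,\ \Imag z_2>|w|^2\}$ is not even homogeneous. So orbit accumulation at $p$ cannot by itself give $\prod_j\B^{n_j}$. Nor are the $n_j$ read off from the $\rho_j$ at $p$: they come from a block decomposition of $H$, which is available only once the Siegel domain is known to be homogeneous (Kodama's Corollary~1). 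The missing idea is to use the finite-volume hypothesis a \emph{second} time, now on $S=\mathcal D(\R_+^k,H)$; this is Lemma~\ref{lem:split}. After transporting $g$ and $\Gamma$ to $S$ (so $\Lambda=F\Gamma F^{-1}$), the lattice lemma of \cite{KLW26} makes $\Lambda\cap\Aut_0(S)$ a lattice, so $\Aut_0(S)$ is unimodular. Then $\operatorname{tr}(\operatorname{ad}E)=0$ for the Euler field forces $\dim\mathfrak g_{1/2}=2m$ and $\dim\mathfrak g_1=k$. Vey's argument gives transitivity, and only then does Kodama's homogeneous result yield the product of balls.

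Your orbit-accumulation argument is also incomplete. Case~1 is fine: it directly gives $(*)$ with $k_\nu=w_\nu$. To get $\gamma_\nu(w)\to p$, use $k_D(\gamma_\nu w,\gamma_\nu w_\nu)=k_D(w,w_\nu)\to0$ rather than a comparison of $g$ with $\kappa_D$, which you do not have. The trouble is Case~2. For an arbitrary complete $\Aut(D)$-invariant K\"ahler metric $g$, nothing ties $g$ to the boundary geometry at $p$: there are no curvature bounds and no comparison with the Kobayashi or Bergman metric. Hence the uniform lower bound on $\Vol_g(B_g(z,r))$ in Subcase~2a has no source, since Schwarz-lemma comparisons need curvature bounds. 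In Subcase~2b, small injectivity radius need not come from short $\Gamma$-loops (it can come from conjugate points), and $d_g(z_\nu,\gamma_\nu z_\nu)\to0$ gives no Kobayashi or Euclidean control. The closing dichotomy of 2b is also false. The $\gamma_\nu$ may all equal a single fixed-point-free parabolic-type element, as in a cusp of a finite-area hyperbolic surface, and this contradicts neither discreteness nor freeness.

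The paper avoids metric comparisons entirely. Lemma~\ref{lem:peak} uses the strictly plurisubharmonic $\rho$ of (C.5) to build a continuous PSH peak function $u$ at $p$. The envelope $V=(\sup_{\gamma\in\Gamma}u\circ\gamma)^*$ is bounded, PSH and $\Gamma$-invariant, hence constant on the complete finite-volume quotient by Lemma~\ref{lem:liouville}. So $V\equiv0$, and the peak property forces $\gamma_\nu q\to p$ for every $q$ (Proposition~\ref{prop:orbit}). This uses only completeness and finite volume, which is all the hypotheses tell you about $g$.
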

\begin{rmk}
	Note that we have obtained that the Teichmüller space cannot be biholomorphic to a domain with a good corner point on its boundary \cite[Theorem 1.10]{PW26}. Since the Teichmüller space also has a finite-volume quotient with respect to the complete canonical  K\"ahler--Einstein metric, this provides a new proof.
\end{rmk}

Under the Levi flat condition on the smooth boundary, we can obtain the polydisk case without using Theorem \ref{ab}.
\begin{thm}\label{thm:flat}
Let $D\subset\C^n$, $n\ge2$, be a bounded domain with
piecewise $C^2$-smooth Levi flat boundary, and let $\Gamma\le\Aut(D)$ be
discrete and act freely. If $\Gamma\backslash D$ has finite volume for
the complete canonical K\"ahler--Einstein metric, then $D\simeq\mathbb D^n$.
\end{thm}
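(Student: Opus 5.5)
We reduce Theorem \ref{thm:flat} to the general statement Theorem \ref{fv}, and then use Levi flatness to force every factor to be one-dimensional. The steps are as follows.

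\emph{Step 1: pseudoconvexity and the invariant metric.} A domain whose boundary is piecewise $C^2$ and Levi flat is pseudoconvex, since each smooth face is Levi flat and hence has vanishing Levi form. By Cheng--Yau and Mok--Yau, $D$ therefore carries a complete K\"ahler--Einstein metric $g$ of negative Ricci curvature. This metric is unique, so it is $\Aut(D)$-invariant. Thus the metric hypotheses of Theorem \ref{fv} hold.

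\emph{Step 2: a good boundary point.} We must produce $p\in\partial D$ satisfying (C.1)--(C.5). Finite volume of $\Gamma\backslash D$ with $D$ bounded forces $\Gamma$ to be infinite, so $\Aut(D)$ is non-compact. We then argue as in the proof of Theorem \ref{ab}. Take a generic orbit $\gamma_j(z_0)$ accumulating at a boundary point. Finite volume allows us to choose the orbit to accumulate at a point where the local faces are in general position, that is, a point with linearly independent $\partial\rho_1,\dots,\partial\rho_k$. Equivalently, we use that the set of non-generic points has measure zero, so the thick part of the quotient recurs near a generic corner; this is the Zimmer-type recurrence argument.

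At such a point, the Levi flat faces are locally foliated by complex hypersurfaces, and piecewise $C^2$ regularity gives the local PSH peak and barrier conditions. We verify (C.1)--(C.5) at $p$ with $k$ defining functions. I expect this to be the \textbf{main obstacle}: Kodama's conditions include a peak-function and convexity-type hypothesis, which for Levi flat faces must be checked without strict pseudoconvexity. I would first try using $-\log(-\rho_j)$ plurisubharmonicity near each face, as in \cite{PW26}.

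\emph{Step 3: apply Theorem \ref{fv}.} Theorem \ref{fv} gives a biholomorphism $D\simeq\prod_{j=1}^k\B^{n_j}$.

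\emph{Step 4: identify the factors.} Kodama's rescaling realizes the model at $p$. The factor $\B^{n_j}$ corresponds to the face $\{\rho_j=0\}$, whose Levi form has rank $n_j-1$ in the normalized model. Levi flatness forces $n_j-1=0$. An alternative route avoids the faces: a product of balls with some $n_j\ge2$ has a strongly pseudoconvex piece of boundary (in each Bergman-metric boundary stratum), while the scaling limit of a Levi flat face is a Levi flat half-space model; this contradicts invariance of Levi rank under the scaling limits. Hence every $n_j=1$, so $k=n$ and $D\simeq\mathbb D^n$.
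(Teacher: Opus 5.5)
Your Steps 1 and 3 are fine (going through Theorem~\ref{fv} is legitimate, though heavier than needed). Step 2, however, has a genuine gap, and it is the step that carries the content. Theorem~\ref{fv} only needs a boundary point satisfying (C.1)--(C.5). The orbit accumulation is produced inside its proof from that point, via Lemma~\ref{lem:peak} and Proposition~\ref{prop:orbit}, so no recurrence argument is needed.

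Your mechanism for finding the point cannot work. Linear independence of $\partial\rho_1,\dots,\partial\rho_k$ is only (C.3). The substantive condition is (C.5), and for Levi flat faces it \emph{fails} at every boundary point where fewer than $n$ faces meet. Indeed, put $V=\bigcap_j\ker\partial\rho_j(p)$. For $v\in V$, Levi flatness and $\rho_j(p)=0$ give
\[
\cL_\rho(p;v,v)=\sum_j\cL_{\rho_j}(p;v,v)+2A\sum_j|\partial\rho_j(p)v|^2=0,
\]
so strict plurisubharmonicity forces $V=0$, i.e.\ $k=n$ by (C.3). A point chosen by genericity or measure-theoretic recurrence is typically a smooth boundary point ($k=1$), or at best a corner with $k<n$. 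That is exactly where (C.5) is false. Already for $\mathbb D^n$ with a torsion-free cocompact $\Gamma$, orbits accumulate at every boundary point, including smooth ones.

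Kodama's hypotheses contain no peak-function condition; the peak function is a \emph{consequence} of (C.5). The plurisubharmonicity of $-\log(-\rho_j)$ has no bearing on (C.5). The missing ingredient is Pinchuk's existence lemma \cite{pinchuk}: a bounded pseudoconvex domain with piecewise $C^2$-smooth boundary has a boundary point satisfying (C.1)--(C.5). Your Step 1 supplies the pseudoconvexity, and this lemma is what the paper invokes.

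Step 4 is not a proof as written. The matching of faces with factors and the ``invariance of Levi rank under scaling limits'' are not established. Moreover, a biholomorphism onto $\prod_j\B^{n_j}$ need not extend to $\partial D$, so boundary Levi rank is not transported. The displayed computation, applied at the good corner from Pinchuk's lemma, repairs this at once. It gives $k=n$, and then $\sum_{j=1}^n n_j=n$ with $n_j\ge1$ forces every $n_j=1$.

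The paper does exactly this computation and in fact bypasses Theorem~\ref{fv} and the Lie-algebra argument of Lemma~\ref{lem:split}. With $k=n$ there are no $w$-variables, so Proposition~\ref{prop:orbit} and Theorem~\ref{thm:kodama} give $D\simeq\mathcal D(\R_+^n)$ directly. That domain is a product of half-planes, hence biholomorphic to $\mathbb D^n$.
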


Moreover, Theorem \ref{fv} can imply the following corollary, which was originally stated  in \cite[Theorem 1.3]{finite}.
\begin{cor}\label{cor:smooth}
Let $D\subset\C^n$, $n\ge2$, be a bounded pseudoconvex domain
with $C^2$-smooth boundary, and let $\Gamma\le\Aut(D)$ be
discrete and act freely. If
$\Gamma\backslash D$ has finite Bergman, K\"ahler--Einstein,
or Kobayashi--Eisenman volume, then $D\simeq\B^n$.
\end{cor}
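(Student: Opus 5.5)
We derive Corollary \ref{cor:smooth} from Theorem \ref{fv} with $k=1$. First we produce a boundary point $p$ satisfying (C.1)--(C.5) with a single defining function; then we check that each of the three metrics meets the hypotheses of Theorem \ref{fv}. The theorem then gives $D\simeq\B^{n_1}$ with $n_1=n$, i.e.\ $D\simeq\B^n$.

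\textbf{Step 1: choosing the boundary point.} Since $D$ is bounded with $C^2$ boundary, it has a strongly pseudoconvex boundary point $p$. To find one, take $p\in\partial D$ farthest from the origin. At $p$ the domain lies inside a ball of radius $|p|$ tangent to $\partial D$ at $p$. Comparing second fundamental forms shows that the Levi form of a local defining function $\rho$ is positive definite at $p$.

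Near such a point, the conditions (C.1)--(C.5) with $k=1$ amount to the following:
\begin{enumerate}[(i)]
\item there is a single $C^2$ defining function $\rho$ with $d\rho(p)\neq0$, so no corner conditions arise;
\item the local peak and pseudoconvexity requirements hold, because $p$ is strongly pseudoconvex and hence admits a local holomorphic, and in particular plurisubharmonic, peak function.
\end{enumerate}
This is precisely the situation Kodama's conditions are modeled on, so I expect verifying them to be routine. The only care needed is to match the normalization of $\rho$ required in (C.1)--(C.5).

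\textbf{Step 2: checking the metrics.} The Bergman metric exists because $D$ is bounded. It is complete because $D$ is a bounded pseudoconvex domain with $C^2$ boundary, hence hyperconvex (Diederich--Fornaess), and bounded hyperconvex domains are Bergman complete (B{\l}ocki--Pflug, Herbort). Since $D$ is pseudoconvex, Cheng--Yau and Mok--Yau give a complete K\"ahler--Einstein metric. Both metrics are biholomorphically invariant, so they are $\Aut(D)$-invariant K\"ahler metrics, and Theorem \ref{fv} applies directly to them.

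\textbf{Step 3: the Kobayashi--Eisenman volume, the main obstacle.} The Kobayashi--Eisenman volume form is not given by a K\"ahler metric. I would reduce it to the Kähler–Einstein case by comparison. On a bounded domain, the Kobayashi--Eisenman volume form $\mathcal K_D$ and the K\"ahler--Einstein volume form $\omega_{KE}^n$ are uniformly comparable, by the Schwarz lemma of Yau and Mok in one direction and a standard estimate in the other. Both forms are invariant, so they descend to $\Gamma\backslash D$. The comparison then shows that finite Kobayashi--Eisenman volume implies finite K\"ahler--Einstein volume, and the second case above finishes the proof.

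If a uniform comparison is not available in the needed direction, I would try this instead. Use invariance together with the localization at $p$ coming from the rescaling in the proof of Theorem \ref{fv}. Along orbits accumulating at $p$, compare both volume forms with those of the limiting ball model. This should show their ratio is bounded on a fundamental domain.
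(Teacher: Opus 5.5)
Your proof is correct. For the Bergman and K\"ahler--Einstein cases it coincides with the paper's. The only difference there is the completeness citation: the paper uses Ohsawa's theorem, you use Diederich--Forn\ae ss hyperconvexity plus B{\l}ocki--Pflug/Herbort, and either is fine.

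The Kobayashi--Eisenman case is where you genuinely diverge. The paper localizes the comparison:
\begin{itemize}
\item it uses the squeezing estimate $s_D\ge c$ near the globally strongly convex point $p$;
\item it compares $\mu_B,\mu_{KE},\mu_K$ only on $A_c=\{s_D\ge c\}$;
\item it shows the $\Gamma$-invariant orbit envelope $W=1+(\sup_\gamma u\circ\gamma)^*$ is supported in $A_c$;
\item it applies the $L^2$ Liouville lemma to force $W\equiv1$, whence $\Vol_{KE}(\Gamma\backslash D)\le C\,\Vol_{\mu_K}(\Gamma\backslash D)<\infty$.
\end{itemize}

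You use a global pointwise comparison instead, and this works, but be precise about it. Only $\mu_{KE}\le C\mu_K$ is needed, and that is exactly the direction Yau's volume Schwarz lemma gives. Take $f\in\mathcal O(\B^n,D)$ with $f(0)=z$. The source $\B^n$ is complete with Ricci curvature bounded below, and the target satisfies $\operatorname{Ric}(g_{KE})=-g_{KE}$, so $f^*\mu_{KE}\le C_n\mu_{\B^n}$. Evaluating at $0$ gives $|\det f'(0)|^2\mu_{KE}(z)\le C_n'\,d\lambda$. Taking the infimum over $f$ gives $\mu_{KE}\le C_n'\mu_K$ on all of $D$.

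Drop the claimed reverse inequality via a ``standard estimate'': it is not available on general bounded pseudoconvex domains, and you never use it. Likewise drop the fallback. It refers to a rescaling that does not occur in the proof of Theorem~\ref{fv}, which goes through a PSH peak function and Kodama's theorem.

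With this correction your route is strictly more economical. It dispenses with the squeezing function, the continuity of $s_D$, the envelope-support argument and the $L^2$ Liouville variant. The paper's localized argument would only be needed if one had volume comparisons near $p$ but no global Schwarz-lemma domination.

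One small correction in Step 1: (C.4)--(C.5) are not ``local peak'' conditions. What must be checked is that $\cL_r(p)$ is positive on $\ker\partial r(p)$, and that $r+Ar^2$ is strictly plurisubharmonic near $p$ for large $A$. The latter follows from $\cL_{r+Ar^2}(p;\xi,\xi)=\cL_r(p;\xi,\xi)+2A|\partial r(p)\xi|^2$, exactly as in the paper.
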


The paper is organized as follows. In Section \ref{sec2} we will recall some definitions and preliminary results. In Section \ref{sect3} we give the proofs of main theorems.

\section{Preliminaries}\label{sec2}
\subsection{Notations}

\begin{enumerate}
    \item $|\cdot|$ denotes the standard Euclidean norm in $\mathbb{C}^n$.

    \item $\mathbb{D}$ is the unit disk in $\mathbb{C}$, $\mathbb{B}^n$ and $\mathbb{D}^n$ are the unit ball and unit polydisk in $\mathbb{C}^n$ for $n\geq2$ respectively.

    \item For each $n\geq1$, $B_r(z):=\{w\in\mathbb{C}^n:|z-w|<r\}$ and $D_r(z):=\{w\in\mathbb{C}^n:|z_i-w_i|<r,i=1,\cdots,n\ \operatorname{with}\ z=(z_1,\cdots,z_n)\}$. 
    
    \item For a real $C^2$ function $\rho$, the Levi form is
    \[
    \cL_\rho(p;\xi,\eta)=\sum_{\alpha,\beta=1}^n \rho_{z_\alpha\bar z_\beta}(p)\xi_\alpha\overline{\eta_\beta}.
    \]
    The kernel of a Hermitian form $L$ is the space of $\xi$ such that $L(\xi,\eta)=0$ for every $\eta$. If $L\ge0$, this equals the zero set of $L(\xi,\xi)$.

    \item We write $k_D$ for the Kobayashi distance and $\kappa_D$ for its infinitesimal metric, normalized by $\kappa_{\B^n}(0;v)=|v|$. Their distance-decreasing property and biholomorphic invariance are used throughout. For a bounded pseudoconvex domain $D$, let $g_B$ and $g_{KE}$ denote the Bergman and complete K\"ahler--Einstein metrics respectively. We normalize $\operatorname{Ric}(g_{KE})=-g_{KE}$. 
\end{enumerate}

The existence and uniqueness of $g_{KE}$ are supplied by
Cheng--Yau and Mok--Yau \cite{CY80,MY83}. Uniqueness implies
$f^*g_{KE}=g_{KE}$ for $f\in\Aut(D)$, since pullback preserves
completeness and the normalized Einstein equation. We denote the
Riemannian volume forms of $g_B$ and $g_{KE}$ by $\mu_B$ and
$\mu_{KE}$. With $d\lambda$ the standard Euclidean volume form,
our Kobayashi--Eisenman volume convention is
\[
\mu_K(z)=\inf\left\{\frac{1}{|\det f'(0)|^2}:
 f\in\mathcal O(\B^n,D),\ f(0)=z\right\}d\lambda(z).
\]
A zero determinant contributes $+\infty$ to the infimum.

\subsection{Piecewise smooth boundary and good corner point}

Following \cite{pinchuk}, a bounded domain $\Omega\subset\mathbb{C}^n$ has \emph{piecewise $C^r$-smooth boundary} if there exists a neighborhood $U$ of $\overline\Omega$ and $C^r$-smooth functions $\rho_1,\cdots,\rho_k:U\rightarrow\mathbb{R}$ such that
\[
\Omega=\{z\in U:\rho_1(z)<0,\cdots,\rho_k(z)<0\}
\]
and for any possible distinct indices $i_1,\cdots,i_l$, the following
\[
d\rho_{i_1}\wedge d\rho_{i_2}\wedge\cdots\wedge d\rho_{i_l}\neq0 
\]
holds at common boundary points of the corresponding active faces.  

Let $\rho:\mathbb{C}^n\rightarrow\mathbb{R}$ be a $C^2$-smooth function such that 
\[
\operatorname{grad}\rho(p)\neq0\ \operatorname{with}\ \rho(p)=0.
\]
The hypersurface $H=\{z\in\C^n:\rho(z)=0\}$ is \emph{Levi flat} at $p\in H$ if
\[
\sum_{i,j=1}^n\frac{\partial^2\rho}{\partial z_i\partial\overline{z}_j}(p)w_i\overline{w}_j=0
\]
for any $w=(w_1,\cdots,w_n)$ whenever
\[
\sum_{i=1}^n\frac{\partial\rho}{\partial z_i}(p)w_i=0.
\]
Moreover, $H$ is called Levi flat, if it is Levi flat at every point. For a bounded domain with piecewise $C^2$-smooth boundary  $\Omega=\{z\in U:\rho_1(z)<0,\cdots,\rho_k(z)<0\}$, its boundary $\partial\Omega$ is called \emph{piecewise Levi flat} if the hypersurface $\{z\in U:\rho_i(z)=0\}$ is Levi flat for each $i=1,\cdots,k$.

Latter, Kodama \cite{K86} uses the following local five hypotheses. 
For a bounded domain \(D\) in \(\C^n\) \((n\geq2)\), we say a boundary point \(p\in\partial D\) is \emph{good corner} if there exist an open neighborhood \(U\) of \(p\) and real-valued \(C^2\)-functions \(\rho_1,\dots,\rho_k\) \((1\le k\le n)\) defined on \(U\) such that
\begin{align}
	&\rho_1(p)=\cdots=\rho_k(p)=0; \tag{C.1}\\
	&D\cap U=\{z\in U:\rho_i(z)<0,\quad i=1,\dots,k\}; \tag{C.2}\\
	&\bar\partial\rho_1\wedge\cdots\wedge\bar\partial\rho_k(z)\ne 0
	\qquad\text{for all }z\in U; \tag{C.3}\\
	&\sum_{\alpha,\beta=1}^{n}
	\frac{\partial^2\rho_i}{\partial z_\alpha\partial\bar z_\beta}(p)
	\xi_\alpha\bar\xi_\beta\ge 0,\quad
	\xi\in \bigcap_{j=1}^k\ker\partial\rho_j(p),\quad i=1,\dots,k, \tag{C.4}
\end{align}
where
\[
\ker\partial\rho_j(p)=\left\{\xi=(\xi_\alpha)\in\C^n:
\sum_{\alpha=1}^{n}\frac{\partial\rho_j}{\partial z_\alpha}(p)\xi_\alpha=0\right\};
\]
for some constant \(A\ge 0\), the function
\[
\rho=\sum_{i=1}^{k}\rho_i+A\sum_{i=1}^{k}\rho_i^2 \tag{C.5}
\]
is strictly plurisubharmonic on \(U\). 

Pinchuk's existence result \cite[Lemma 1.2--1.3]{pinchuk} states that every bounded pseudoconvex
domain \(D\) with piecewise \(C^2\)-smooth boundary admits a
boundary point \(p\in\partial D\) satisfying (C.1)--(C.5).
The local face functions are selected from the active faces of the original defining system. The construction supplies complex linear independence at the selected point, which persists on a neighborhood by continuity. Real independence of the differentials in the piecewise smooth definition alone does not imply (C.3).

\subsection{Siegel domains}
According to Piateckii--Sapiro \cite{PC66} (see also \cite{K86}), 
we first recall the definition of an \(\Omega\)-hermitian form.
For a given open convex cone \(\Omega\subset\R^n\) containing no full straight line, a mapping \(H\colon\C^m\times\C^m\to\C^n\) is called an \emph{\(\Omega\)-hermitian form} if
\begin{enumerate}
	\item \(H\) is complex linear in the first variable;
	\item \(\overline{H(u,v)}=H(v,u)\) for all \(u,v\in\C^m\);
	\item \(H(u,u)\in\overline{\Omega}\) for all \(u\in\C^m\), where \(\overline{\Omega}\) denotes the topological closure of \(\Omega\) in \(\R^n\);
	\item \(H(u,u)=0\) if and only if \(u=0\).
\end{enumerate}
Then the  definition of a Siegel domain is as follows.
\begin{defn}
	For a given open convex cone \(\Omega\subset\R^n\) containing no full straight line and an \(\Omega\)-hermitian form \(H\colon\C^m\times\C^m\to\C^n\), the domain
	\[
	\mathcal{D}(\Omega,H)=\{(z,w)\in\C^n\times\C^m:
	\Imag z-H(w,w)\in\Omega\}
	\]
	in \(\C^n\times\C^m\) is called a \emph{Siegel domain of the second kind}  associated with \(\Omega\) and \(H\). When \(m=0\) it reduces to the domain
	\[
	\mathcal{D}(\Omega)=\{z\in\C^n:\Imag z\in\Omega\}.
	\]
	Such a domain is called a \emph{Siegel domain of the first kind}.
\end{defn}

In this paper we mainly study  a special case of a Siegel domain of the first or the second kind.
For the open convex cone
\[
\R_+^k=\{(y_1,\dots,y_k)\in\R^k:y_i>0,\quad i=1,\dots,k\}
\]
in \(\R^k\) \((1\le k\le n)\) and an \(\R_+^k\)-hermitian form \(H\colon\C^{n-k}\times\C^{n-k}\to\C^k\), let \(\mathcal{D}(\R_+^k,H)\) denote the Siegel domain in \(\C^k\times\C^{n-k}\) associated to \(\R_+^k\) and \(H\).
Writing $H=(H_1,\ldots,H_k)$,
each $H_j$ is semipositive and $H(w,w)=0$ implies $w=0$.
For semipositive forms, this is equivalent to $\sum_jH_j>0$.
Thus, the model contains no affine complex lines. 
The case $n=k$ has no
$w$-variables and is simply the product of upper half-planes.
Then we have the following.

\begin{lemma}[\cite{PC66}]\label{PCL}
	There exists a biholomorphic mapping from \(\mathcal{D}(\R_+^k,H)\) onto a subdomain \(D\) of the direct product of the open unit balls
	\[
	\mathbb{B}^{n_1}\times\cdots\times\mathbb{B}^{n_k},
\quad n_j\ge1,\quad \sum_{j=1}^k n_j=n.
	\]
\end{lemma}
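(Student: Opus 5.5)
The plan is to write down an explicit biholomorphism, factor by factor, following Piatetskii-Shapiro's generalized Cayley transform. First we split the variables $w\in\C^{n-k}$ according to the forms $H_j$. The cleanest route uses the observation that each $H_j$ is semipositive and $\sum_j H_j>0$. I would first reduce to the situation in which the $w$-variables decompose as $w=(w^{(1)},\dots,w^{(k)})$ with $w^{(j)}\in\C^{n_j-1}$ and
\[
H_j(w,w)=|w^{(j)}|^2,
\]
so that $\mathcal D(\R_+^k,H)$ is a product of Siegel upper half-spaces
\[
\{(z_j,w^{(j)}):\Imag z_j>|w^{(j)}|^2\}.
\]
Each of these is biholomorphic to $\B^{n_j}$ by the classical Cayley transform.

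In general, however, the kernels of the $H_j$ need not give such a splitting, since the forms can overlap. So instead of a reduction I would prove the weaker statement, which is all the lemma asserts: the domain embeds as a subdomain of a product of balls.

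\emph{Choice of the $n_j$.} We need $\sum_j n_j=n$ with $n_j\ge1$, that is, $\sum_j(n_j-1)=n-k=m$. Choose inductively subspaces $V_1,\dots,V_k$ with $\C^m=V_1\oplus\cdots\oplus V_k$ such that $H_j$ is positive definite on $V_j$. This is possible because $\sum_j H_j>0$, so one can take $V_j$ to be a complement, inside the orthogonal complement (for $\sum_i H_i$) of $V_1\oplus\dots\oplus V_{j-1}$, of $\ker H_j$ restricted there. Some $V_j$ may be zero. Set $n_j=1+\dim V_j$.

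\emph{Domination.} For $w=\sum_j v_j$ with $v_j\in V_j$, positivity gives
\[
\Imag z_j>H_j(w,w).
\]
What we actually need is a comparison $H_j(w,w)\ge c\,|\pi_j w|^2$, where $\pi_j$ is the projection onto $V_j$. This fails for overlapping forms, and it is the main obstacle. To get around it, I would map into the product of half-spaces through the nonlinear map
\[
(z,w)\mapsto\bigl(z_j+\text{correction}_j,\;\pi_j w\bigr)_j,
\]
or more simply replace $\B^{n_j}$ containment by boundedness.

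\emph{Boundedness suffices.} Piatetskii-Shapiro shows that every Siegel domain of the second kind is biholomorphic to a bounded domain. The map is
\[
z\mapsto \frac{z-i e}{z+ie}
\]
componentwise in the $z_j$ (with $e=(1,\dots,1)\in\R_+^k$), together with
\[
w\mapsto \frac{w}{z_j+i}
\]
in suitable blocks. The images are controlled by the estimate
\[
|z_j+i|^2\ge 4\,\Imag z_j>4H_j(w,w).
\]
Concretely, I would define
\[
\zeta_j=\frac{z_j-i}{z_j+i},\qquad \omega^{(j)}=\frac{2\,A_j w}{z_j+i},
\]
where $A_j$ is a square root of $H_j$, that is, $|A_jw|^2=H_j(w,w)$, and $A_j$ maps into $\C^{r_j}$ with $r_j=\operatorname{rank}H_j$. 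The standard Cayley computation gives
\[
|\zeta_j|^2+|\omega^{(j)}|^2<1,
\]
so $(\zeta_j,\omega^{(j)})\in\B^{1+r_j}$.

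\emph{Injectivity and the dimension count.} The map is injective: $\zeta$ determines $z$, and the $A_jw$ together determine $w$ because $\sum_j A_j^*A_j>0$. Its image, however, lies in $\prod_j\B^{1+r_j}$, whose dimension $\sum_j(1+r_j)$ may exceed $n$. To land in a product with $\sum_j n_j=n$, replace $A_j$ by its restriction to the blocks $V_j$ chosen above, using the block-triangular structure of $H$ in that basis. I expect the real difficulty here: ensuring that the truncated map still lands in the ball, i.e. that
\[
H_j(w,w)\ge|A_j'\pi_j w|^2
\]
holds for the truncated square roots $A_j'$. I would try to arrange this first by choosing the $V_j$ via a Gram–Schmidt-type triangularization adapted to the $H_j$.
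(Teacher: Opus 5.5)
\textbf{There is a genuine gap.} The step you flag at the end as ``the real difficulty'' is the whole content of the lemma, and you leave it open.

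Your full-rank map with $|A_jw|^2=H_j(w,w)$ embeds $\mathcal D(\R_+^k,H)$ into $\prod_j\B^{1+r_j}$. That product has dimension $k+\sum_j r_j$, which can exceed $n$, so the map is not a biholomorphism onto a subdomain of an $n$-dimensional product of balls. Falling back on boundedness proves a strictly weaker statement.

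The splitting you do propose cannot be rescued within your scheme. Take a decomposition $\C^m=\bigoplus_jV_j$ with projections $\pi_j$. The estimate $H_j(w,w)\ge c\,|\pi_jw|^2$ holds if and only if $\ker H_j\subseteq\ker\pi_j=\bigoplus_{i\ne j}V_i$, and this must hold for every $j$ at once. Positivity of $H_j$ on $V_j$ is not enough: if $u\in\ker H_j$ has $\pi_ju\ne0$, then $\omega^{(j)}$ is unbounded along $w=su$, $z_j=i$.

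Your inductive recipe produces a splitting that is orthogonal for $G=\sum_iH_i$. In that setting the condition reads $V_j\subseteq(\ker H_j)^{\perp_G}$. Now take $m=2$, $k=3$, $H_1=|w_1|^2$, $H_2=|w_2|^2$, $H_3=|w_1+w_2|^2$.
\begin{itemize}
\item The lines $(\ker H_j)^{\perp_G}$ are spanned by $(2,-1)$, $(1,-2)$, $(1,1)$.
\item Their pairwise $G$-products are $3,3,-3$, so no two of them are $G$-orthogonal.
\item Each $V_j$ would have to lie in one of these lines, and at least two must be nonzero and mutually $G$-orthogonal. So no choice of complements or ordering works.
\item Concretely, $V_1=\C(2,-1)$ forces $V_2=\{w_1=0\}$. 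Then $\pi_2(s,0)=(0,s/2)$ while $H_2$ vanishes at $(s,0)$.
\end{itemize}
So your assertion that the comparison ``fails for overlapping forms'' is only true for a badly chosen splitting; no nonlinear correction is needed.

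\textbf{The missing idea is to choose the splitting on the dual side.} Let $R_j=\operatorname{Ann}(\ker H_j)\subset(\C^m)^*$. Since $\bigcap_j\ker H_j=\{0\}$, we have $\sum_jR_j=(\C^m)^*$. Extract a basis of $(\C^m)^*$ greedily from $R_1,\dots,R_k$, and let $\ell_j\colon\C^m\to\C^{d_j}$ collect the functionals taken from $R_j$. Then:
\begin{itemize}
\item $(\ell_1,\dots,\ell_k)$ is a linear isomorphism and $\sum_jd_j=m$;
\item $\ker\ell_j\supseteq\ker H_j$, so compactness on $\C^m/\ker H_j$ gives $H_j(w,w)\ge c_j|\ell_j(w)|^2$ with $c_j>0$.
\end{itemize}
With $n_j=1+d_j$, define
\[
(z,w)\longmapsto\Bigl(\frac{z_j-i}{z_j+i},\ \frac{2\sqrt{c_j}\,\ell_j(w)}{z_j+i}\Bigr)_{j=1}^k .
\]
This map lands in $\prod_j\B^{n_j}$ by your identity $|z_j+i|^2-|z_j-i|^2=4\Imag z_j$. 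It is injective: $\zeta_j$ recovers $z_j$, then $\omega^{(j)}$ recovers $\ell_j(w)$, and the $\ell_j$ together recover $w$. It is equidimensional, since $\sum_jn_j=k+m=n$, hence a biholomorphism onto an open connected subset. In the example above it gives $\ell_1=w_1$, $\ell_2=w_2$, $d_3=0$, i.e.\ a subdomain of $\B^2\times\B^2\times\mathbb D$.

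For comparison, the paper does not prove this lemma; it cites Piatetskii-Shapiro and later uses only the resulting bounded realization.
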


We remark that every homogeneous bounded domain in \(\C^n\) is pseudoconvex and that \(\B^{n_1}\times\cdots\times\B^{n_k}\) is biholomorphically equivalent to the so-called Siegel domain
\[
\mathcal{E}(n_1,\dots,n_k)
=\{(z,\omega)\in\C^k\times\C^{n-k}:
\Imag z_i-|\omega_i|^2>0,\ i=1,\dots,k\}
\]
in \(\C^k\times\C^{n-k}=\C^k\times\C^{n_1-1}\times\cdots\times\C^{n_k-1}\), where \(|\cdot|\) denotes the Euclidean norm on \(\C^{n_i-1}\) for each $i$.
For a domain \(D\subset\C^n\), we denote by \(\Aut(D)\) the group of all biholomorphic automorphisms of \(D\).  
We now recall the following realization theorem due to Kodama.

\begin{thm}[\cite{K86}, Theorem I]\label{thm:kodama}
	Let \(D\subset\C^n(n\geq2)\) be a bounded domain  with a boundary point \(p\in\partial D\) satisfying the conditions from (C.1) through (C.5). Assume that \((*)\) holds: 
	there exist a compact set \(K\subset D\), a sequence \(\{k_n\}\subset K\) and a sequence \(\{f_n\}\subset\Aut(D)\) such that	
	$$\lim_{n\to\infty}f_n(k_n)=p.$$
	Then \(D\) is biholomorphically equivalent to a Siegel domain \(\mathcal{D}(\R_+^k,H)\) in \(\C^k\times\C^{n-k}\). Conversely, every Siegel domain \(\mathcal{D}(\R_+^k,H)\) in \(\C^k\times\C^{n-k}\) is biholomorphically equivalent to a bounded domain \(D\subset\C^n\) satisfying all the conditions (C.1)--(C.5) and \((*)\).
\end{thm}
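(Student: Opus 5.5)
The forward direction will go by Pinchuk-type scaling at the corner $p$, followed by a normal families argument; the converse will use Lemma \ref{PCL} and explicit automorphisms of the Siegel model.

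\emph{Normalization at $p$.} By (C.3), the vectors $\partial\rho_1(p),\dots,\partial\rho_k(p)$ are complex linearly independent. After a complex affine change of coordinates $(z,w)\in\C^k\times\C^{n-k}$, I will assume $p=0$ and $\partial\rho_j(0)=\tfrac{1}{2i}dz_j$, so that $\rho_j=-\Imag z_j+O(|z|^2+|w|^2)$. A further holomorphic quadratic change $z_j\mapsto z_j+q_j(z,w)$ removes the pure $(2,0)$ and $(0,2)$ Taylor terms of each $\rho_j$. Then
\[
\rho_j=-\Imag z_j+H_j(w,w)+O\bigl(|z|(|z|+|w|)\bigr)+o(|w|^2),
\]
where $H_j$ is the Levi form of $\rho_j$ at $0$ restricted to $\bigcap_j\ker\partial\rho_j(0)=\{z=0\}$.
\begin{itemize}
\item By (C.4), each $H_j\ge0$.
\item By (C.5), $\sum_j H_j>0$. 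Indeed, on the common kernel the Levi form of $\rho_i^2$ at $p$ equals $2|\partial\rho_i(p)\xi|^2=0$, so the Levi form of $\rho$ there equals $\sum_j H_j$.
\end{itemize}
Hence $H=(H_1,\dots,H_k)$ is an $\R_+^k$-hermitian form, and the target model is $\mathcal{D}(\R_+^k,H)$.

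\emph{Localization.} Put $q_n=f_n(k_n)\to p$. I need that $f_n$ maps a fixed Kobayashi ball around $K$ into shrinking neighborhoods of $p$. I would obtain this from a local plurisubharmonic peak function at $p$, built from the strictly plurisubharmonic $\rho$ in (C.5), e.g. $\rho-\epsilon|z|^2$ patched with a global bounded psh function on $D$. Standard localization of $k_D$ near a psh peak point then shows that $f_n(B_{k_D}(k_0,R))\subset U_n$ with $U_n$ shrinking to $p$.

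\emph{Scaling maps.} Let $\delta_{n,j}=|\rho_j(q_n)|$. The affine maps $A_n$ translate $q_n$ (after projecting to the faces) and scale $z_j$ by $\delta_{n,j}^{-1}$ and $w$ by a common factor $\epsilon_n^{-1/2}$, with $\epsilon_n$ chosen compatibly. The expansion above shows that $A_n(D\cap U)$ converges in the local Hausdorff sense to
\[
\{\Imag z_j>\tilde H_j(w,w)\},
\]
where $\tilde H_j$ is a limit of rescalings of $H_j$.

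\emph{Main obstacle.} I expect the hardest step to be this convergence, because the rates $\delta_{n,j}$ may differ between faces. If some $\delta_{n,j}/\epsilon_n\to0$ or $\infty$, some $\tilde H_j$ degenerate, and the mixed error terms $O(|z||w|)$ must be controlled uniformly.
\begin{itemize}
\item First I would pass to subsequences and use $\sum_j H_j>0$ to choose $\epsilon_n=\max_j\delta_{n,j}$.
\item I would then apply a diagonal linear change in the $z_j$ together with a unitary change in $w$.
\item The goal is a limit that is still of the form $\mathcal D(\R_+^k,\tilde H)$ with $\sum\tilde H_j>0$, possibly replacing $H$ by a degenerated but still $\R_+^k$-hermitian form.
\end{itemize}

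\emph{Normal families.} The limit domain is biholomorphic to a bounded domain by Lemma \ref{PCL}, hence complete hyperbolic and taut. So $A_n\circ f_n$ subconverges uniformly on compacta to a holomorphic $F\colon D\to\mathcal D(\R_+^k,\tilde H)$, with $F(k_\infty)$ interior because of the normalization of $A_n(q_n)$. The inverses $f_n^{-1}\circ A_n^{-1}$ are defined on exhausting compacta of the limit and map into the bounded domain $D$. By Montel's theorem they subconverge to some $G$, and $G\circ F=\mathrm{id}$ and $F\circ G=\mathrm{id}$ follow in the standard way. Hence $F$ is a biholomorphism.

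\emph{Converse.} Given $\mathcal D(\R_+^k,H)$, Lemma \ref{PCL} gives a Cayley-type map $\Phi$ onto a bounded $D\subset\prod\B^{n_j}$. Take $p$ to be the image of the point at infinity along $z=(it,\dots,it)$, $w=0$; near $p$ the domain is cut out by the $k$ pulled-back Siegel inequalities. I would then check directly:
\begin{itemize}
\item (C.3) follows from the independence of $dz_j$.
\item (C.4) follows from $H_j\ge0$.
\item (C.5) follows from $\sum H_j>0$ once $A$ is large.
\end{itemize}
Finally, $(*)$ is realized by the dilations $(z,w)\mapsto(tz,\sqrt t\,w)$ with $t\to\infty$. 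These push the fixed point $(i,\dots,i,0)$ to $p$.
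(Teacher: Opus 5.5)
The paper gives no proof of this statement; it is quoted from Kodama \cite{K86}, so there is nothing internal to compare with. Taken on its own terms, your forward direction has a genuine gap exactly at the step you flag, and your proposed repair does not close it. With $\epsilon_n=\max_j\delta_{n,j}$, any face with $\delta_{n,j}/\epsilon_n\to0$ and $H_j\neq0$ contributes $(\epsilon_n/\delta_{n,j})H_j(W,W)\to\infty$. The rescaled domains then collapse onto $\{W\in\ker H_j\}$ and have no open limit. With $\epsilon_n=\min_j\delta_{n,j}$ instead, the slow faces lose their $H_j$, and $\sum_j\tilde H_j$ can degenerate (e.g.\ $H_1=|w_1|^2$, $H_2=|w_2|^2$, $\delta_{n,2}\ll\delta_{n,1}$). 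A common factor on $w$ plus a unitary change cannot undo this. Independently of how you scale $w$, two kinds of terms get divided by the smallest $\delta_{n,j}$: the $z\bar z$ and $z\bar w$ parts of the Levi forms, which no holomorphic coordinate change removes, and the $o(|w|^2)$ remainders of the $C^2$ functions. For instance, a term $|z_1|^2$ in $\rho_2$ becomes $(\delta_{n,1}^2/\delta_{n,2})|Z_1|^2$, which does not tend to $0$ once $\delta_{n,2}\lesssim\delta_{n,1}^2$. Hypothesis $(*)$, as you use it, gives no control on the ratios $\delta_{n,i}/\delta_{n,j}$.

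This is not a technicality. Let $\Omega=\{\Re\zeta_1<0,\ \Re\zeta_2+\frac12|\zeta_1|^2<0\}\subset\C^2$, which is invariant under $(\zeta_1,\zeta_2)\mapsto(t\zeta_1,t^2\zeta_2)$. Let $D=\Psi(\Omega)\subset\mathbb D^2$ with $\Psi(\zeta)=\bigl(\frac{1+\zeta_1}{1-\zeta_1},\frac{1+\zeta_2}{1-\zeta_2}\bigr)$. At $p=\Psi(0)=(1,1)$, the functions $\Re\zeta_1$ and $\Re\zeta_2+\frac12|\zeta_1|^2$, composed with $\Psi^{-1}$, satisfy (C.1)--(C.5) with $k=n=2$. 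Indeed (C.4) is vacuous, and the Levi form of $\rho$ at $0$ is $\frac12|\xi_1|^2+\frac A2|\xi|^2$. The conjugated dilations give $(*)$ with $K=\{0\}$ along the path $(-t,-t^2)$, where $\delta_2\sim\delta_1^2$. However, the face $\{\Re\zeta_2+\frac12|\zeta_1|^2=0,\ \Re\zeta_1<0\}$ is strongly pseudoconvex. So if $D\simeq\mathcal D(\R^2_+)\simeq\mathbb D^2$, homogeneity together with Rosay's local version of the Wong--Rosay theorem would give $D\simeq\B^2$, which is absurd. Hence no scaling argument can prove the statement from $(*)$ alone. You need an extra input that forces comparable rates (you should compare with Kodama's precise hypotheses), and your proposal contains no such mechanism.

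The converse also needs a fix. At the image of infinity, the pulled-back inequalities are not of the required form when $H$ mixes blocks. With $a_j=\frac{z_j-i}{z_j+i}$ and $b=\frac{2w}{z_1+i}$, the $j$-th inequality reads $(1-|a_j|^2)|1-a_1|^2>H_j(b,b)|1-a_j|^2$, and its defining function has zero differential at the corner, so your checks of (C.3)--(C.5) do not apply there. Place $p$ at the image of the origin instead. There $\rho_j=H_j(w,w)-\Imag z_j$ satisfies (C.1)--(C.5) by exactly the computations you list, the Cayley-type realization is biholomorphic near $0$, and $(tz,\sqrt t\,w)$ with $t\to0$ gives $(*)$.
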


\subsection{PSH peak point}

A point $p\in\partial D$ is a \emph{plurisubharmonic(PSH) peak point} if there
exists a negative plurisubharmonic function $u$ on $D$ such that
$u(z)\to0$ as $z\to p$ and $\sup_{D\setminus U}u<0$ for every
neighborhood $U$ of $p$. 

\begin{lemma}\label{lem:peak}
Let $D\subset\C^n$ be a bounded domain, and let $p\in\partial D$ satisfy (C.1)--(C.5). Then there exists $u\in\PSH(D)\cap C(D)$ with $-1\le u<0$ such that
	\[
	\lim_{D\ni z\to p}u(z)=0,\quad \sup_{D\setminus B(p,\varepsilon)}u<0\quad(\varepsilon>0).
	\]
\end{lemma}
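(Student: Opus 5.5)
The plan is to build the peak function in two stages. First we construct a local plurisubharmonic peak function near $p$ from the defining functions. Then we globalize it with a standard gluing (maximum) construction, using boundedness of $D$.

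\emph{Local stage.} Let $\rho$ be as in (C.5), strictly plurisubharmonic on $U$. Shrinking $U$ to a ball $B(p,r)$, we get $i\partial\bar\partial\rho\ge c\,i\partial\bar\partial|z|^2$ there for some $c>0$. On $D\cap U$ each $\rho_i<0$, so $\rho<0$ once $U$ is small enough that $A|\rho_i|<1$. Let $\ell(z)=\sum_{\alpha}\rho_{z_\alpha}(p)(z_\alpha-p_\alpha)$ be the holomorphic linear part of $\rho$ at $p$. By Taylor expansion,
\[
\rho(z)=2\operatorname{Re}\ell(z)+\operatorname{Re}Q(z-p)+\cL_\rho(p;z-p,z-p)+o(|z-p|^2),
\]
with $Q$ a holomorphic quadratic. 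Set $h(z)=2\ell(z)+Q(z-p)$, which is holomorphic. Then on $D\cap U$
\[
\operatorname{Re}h(z)\le \rho(z)-c'|z-p|^2\le -c'|z-p|^2
\]
for $U$ small. Hence $v:=\operatorname{Re}h$ is pluriharmonic and negative on $\overline{D}\cap U\setminus\{p\}$, with $v(z)\to0$ as $z\to p$ and $v\le -c'\varepsilon^2$ off $B(p,\varepsilon)$. This uses only (C.5) and (C.2) ($\rho<0$ on $D\cap U$). The main subtlety I expect is checking that the $\rho_i^2$ terms and the $o(|z-p|^2)$ error do not spoil the strict inequality. Strict plurisubharmonicity of $\rho$ on all of $U$ gives the uniform constant $c$, so choosing $r$ small handles this.

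\emph{Globalization.} Since $D$ is bounded, fix $M>0$ with $|z-p|^2\le M$ on $D$. Let $\delta=c'r^2/4$. On $D\cap B(p,r)$, $v\le -c'|z-p|^2$, so on the shell $D\cap\{r/2\le|z-p|<r\}$ we have $v\le -c'r^2/4=-\delta$. Define
\[
\tilde u(z)=\begin{cases}\max\{v(z),\,-\delta/2\}, & z\in D\cap B(p,r),\\ -\delta/2, & z\in D\setminus B(p,r).\end{cases}
\]
Near the sphere $|z-p|=r$ inside $D$ we have $v<-\delta/2$, so $\tilde u\equiv-\delta/2$ there. Thus the two definitions agree on an open overlap and $\tilde u$ is continuous and plurisubharmonic on $D$. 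Moreover $\tilde u<0$, $\tilde u(z)\to0$ as $z\to p$, and for each $\varepsilon\in(0,r)$ we get $\sup_{D\setminus B(p,\varepsilon)}\tilde u\le\max\{-c'\varepsilon^2,-\delta/2\}<0$. The bound $\sup<0$ for $\varepsilon\ge r$ is immediate.

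Finally put $u=\tilde u/m$ with $m=\max\{1,\sup_D|\tilde u|\}$; the supremum is finite because $v$ is bounded on the bounded set $D\cap B(p,r)$. Then $-1\le u<0$, $u\in\PSH(D)\cap C(D)$, and all the peak properties are preserved.

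An alternative that avoids the holomorphic correction $h$ is to take $\max\{\rho(z)-\tfrac{c}{2}|z-p|^2+\dots\}$ directly. The pluriharmonic choice is cleaner, since it yields negativity on $\overline D\cap U\setminus\{p\}$ for free.
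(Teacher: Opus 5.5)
Your proposal is correct and follows essentially the same route as the paper. Both arguments take the pluriharmonic real part of the holomorphic second-order Taylor polynomial of $\rho$ at $p$ (your $h=2\ell+Q$ is twice the paper's $F$), bound it above by $-c|z-p|^2$ on $\overline D$ near $p$ using $\rho\le 0$ there, and glue it to a negative constant by a maximum that equals the constant on a collar near $\partial B(p,r)$. The only cosmetic difference is that the paper rescales before taking the maximum, via $\max\{h/a,-1\}$, whereas you take $\max\{v,-\delta/2\}$ and rescale afterwards.
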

\begin{proof}
	After translating $p$ to $0$, shrink the neighborhood so that $\rho_i\le0$ on the local closure of $D$ and, if $A>0$, $|\rho_i|<1/(2A)$ for every $i$. Then
	\[
	\rho=\sum_i\rho_i(1+A\rho_i)\le0
	\]
	on that closure, and $\rho<0$ on $D$ in this neighborhood.
	Since $\rho$ is strictly PSH, we have 
	$$\cL_\rho(0;\xi,\xi)\ge c|\xi|^2$$ 
	for some $c>0$. Let
	\[
	F(z)=\sum_\alpha \rho_{z_\alpha}(0)z_\alpha+\frac12\sum_{\alpha,\beta}\rho_{z_\alpha z_\beta}(0)z_\alpha z_\beta.
	\]
	The $C^2$ Taylor formula gives
	\[
	\rho(z)=2\Re F(z)+\cL_\rho(0;z,z)+o(|z|^2)\ge2\Re F(z)+\frac c2|z|^2
	\]
	on a sufficiently small neighborhood. Since $\rho\le0$ on the local closure of $D$, with $c_0=c/4$ we obtain
	\begin{equation}\label{eq:h}
		h(z):=\Re F(z)\le-c_0|z|^2
	\end{equation}
	for $z\in\overline D$ in that neighborhood. 
	
	Choose $r>0$ such that $\overline{B(0,r)}$ is contained in this neighborhood, then choose $0<\delta<r/2$ and $0<a<c_0(r-\delta)^2$. Define
	\[
	u(z)=\begin{cases}
		\max\{h(z)/a,-1\},&z\in D\cap B(0,r),\\
		-1,&z\in D\setminus B(0,r).
	\end{cases}
	\]
	On the collar $r-\delta\le|z|<r$ inside $D$, inequality \eqref{eq:h} gives $h/a<-1$. 
	Hence the two formulas coincide on an open collar near the artificial boundary. Since $h$ is pluriharmonic, the local maximum construction gives $u\in\PSH(D)\cap C(D)$. It is strictly negative on $D$ and tends to $0$ at $p$. Moreover, for $0<\varepsilon<r$,
	\[
	\sup_{D\setminus B(0,\varepsilon)}u\le-\min\{1,c_0\varepsilon^2/a\}<0,
	\]
	and when $\varepsilon\ge r$ the function equals $-1$ on the relevant set. 
\end{proof}

\subsection{Finite volume Liouville and the orbit envelope}

We need the following bounded plurisubharmonic Liouville lemma, which is stated in \cite[Lemma~2.1]{KLW26}. For completeness, we justify the argument for bounded PSH functions using local Sobolev regularity and also prove a nonnegative \(L^2\) version.
\begin{lemma}\label{lem:liouville}
	Let $(M,g)$ be a connected complete K\"ahler manifold of finite Riemannian volume. Then every bounded real-valued plurisubharmonic function on $M$ is constant.
	More generally, without the finite volume assumption, every bounded nonnegative plurisubharmonic function $w$ satisfying $\int_M w^2\,d\mu_g<\infty$ is constant.
\end{lemma}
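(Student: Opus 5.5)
We prove the second ($L^2$) statement first and then deduce the first from it. Let $w$ be plurisubharmonic and bounded, say $|w|\le C$. Replacing $w$ by $w+C$ (still plurisubharmonic), we may assume $w\ge0$. If $\Vol_g(M)<\infty$, then $\int_M w^2\,d\mu_g\le (2C)^2\Vol_g(M)<\infty$. So it suffices to show: a bounded nonnegative plurisubharmonic $w$ with $w\in L^2(M)$ is constant.

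\textbf{Regularity and subharmonicity.} Since $g$ is K\"ahler, $\Delta_g w=2\operatorname{tr}_g(i\partial\bar\partial w)$ in the sense of distributions. Plurisubharmonicity makes $i\partial\bar\partial w$ a positive $(1,1)$-current, so $\Delta_g w\ge0$ as a positive measure. Hence $w$ is weakly $g$-subharmonic.

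Locally, a bounded plurisubharmonic function lies in $W^{1,2}_{loc}$. This is the standard Caccioppoli bound for subharmonic functions: test $\Delta w\ge0$ against $\chi^2 w_\epsilon$, where $w_\epsilon$ is a smooth decreasing regularization in a chart. This gives $\int\chi^2|\nabla w_\epsilon|^2\le C\int w_\epsilon^2|\nabla\chi|^2$ uniformly in $\epsilon$. Passing to the limit yields $w\in W^{1,2}_{loc}$, together with the inequality below for $w$ itself. The justification of this limiting step is the main technical point: $w$ is only upper semicontinuous, and we need the integration by parts $\int \chi^2 w\,\Delta w\ge0 \Rightarrow \int\chi^2|\nabla w|^2\le 2\int \chi w|\nabla\chi||\nabla w|$. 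I would handle it by working with the smooth approximants $w_\epsilon\downarrow w$ in local charts, patched with a partition of unity, using the bounds $0\le w_\epsilon\le C$ and weak compactness in $W^{1,2}$.

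\textbf{Yau-type cutoff argument.} Fix $o\in M$ and let $\chi_R$ be Lipschitz cutoffs with $\chi_R=1$ on $B_R(o)$, support in $B_{2R}(o)$, and $|\nabla\chi_R|\le1/R$. These have compact support because $g$ is complete (Hopf--Rinow). Testing $\Delta w\ge0$ against $\chi_R^2w\ge0$ gives
\[
\int\chi_R^2|\nabla w|^2\le 2\int\chi_R w|\nabla\chi_R||\nabla w| \le \tfrac12\int\chi_R^2|\nabla w|^2+2\int w^2|\nabla\chi_R|^2 .
\]
Therefore
\[
\int_{B_R}|\nabla w|^2\le \frac{4}{R^2}\int_M w^2\,d\mu_g\to0 \quad\text{as } R\to\infty .
\]
Thus $\nabla w=0$ almost everywhere, so $w$ is constant almost everywhere since $M$ is connected.

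\textbf{Pointwise constancy.} An almost-everywhere constant plurisubharmonic function equals that constant everywhere. In a chart, a plurisubharmonic function is determined by its $L^1_{loc}$ class, via the limit of its mean values on small balls. Hence $w\equiv$ const on $M$.
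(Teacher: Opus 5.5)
Your proposal is correct and follows essentially the same route as the paper. Both arguments reduce to a bounded nonnegative $L^2$ function, obtain $W^{1,2}_{\rm loc}$ regularity from Caccioppoli bounds on chart-wise mollifications, run the Yau-type cutoff argument with Lipschitz cutoffs supplied by completeness, and then upgrade a.e.\ constancy to pointwise constancy (you via mean values, the paper via the submean inequality plus upper semicontinuity), with the paper justifying the test function $\chi_R^2 w$ by the same density argument in $W^{1,2}$ that you sketch.
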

\begin{proof}
	For the finite volume assertion, we add a constant and rescale such that $0\le w\le1$. The resulting function still belongs to $L^2(M,g)$. For the more general assertion, we only rescale, which preserves both nonnegativity and square integrability. Thus, in either case, we may assume $0\le w\le1$ and $w\in L^2(M,g)$. We first show that $w\in W^{1,2}_{\rm loc}(M)$. Indeed, in a holomorphic coordinate ball, convolve $w$ on smaller balls with nonnegative smooth approximate identities. The functions $w_\delta$ are smooth, nonnegative, bounded by $1$, and PSH, hence Euclidean subharmonic. For a smooth compactly supported cutoff $\eta$, integration by parts gives
	\[
	\int \eta^2|\nabla w_\delta|^2\,d\lambda
	\le2\int \eta w_\delta|\nabla\eta|\,|\nabla w_\delta|\,d\lambda
	\le\frac12\int\eta^2|\nabla w_\delta|^2\,d\lambda+2\int w_\delta^2|\nabla\eta|^2\,d\lambda.
	\]
	Thus the local $W^{1,2}$ norms of $w_\delta$ are uniformly bounded. Since convolution converges in $L^2_{\rm loc}$, weak compactness proves the claim. On compact coordinate subsets, the smooth metric and the Euclidean metric are comparable.

	Since the positive current $i\partial\bar\partial w$ has nonnegative trace with respect to the K\"ahler form, we have $\Delta_gw\ge0$ weakly, where $\Delta=\operatorname{div}\nabla$. Completeness provides compactly supported Lipschitz cutoffs $\chi_R$ equal to $1$ on $B_g(o,R)$, vanishing outside $B_g(o,2R)$, with $0\le\chi_R\le1$ and $|\nabla\chi_R|\le2/R$ almost everywhere. The nonnegative function $\chi_R^2w$ is an admissible Sobolev test function, after approximation by nonnegative smooth compactly supported tests. The passage to these tests is justified because $w\in W^{1,2}_{\rm loc}$ makes the functional $\psi\mapsto-\int\langle\nabla w,\nabla\psi\rangle$ continuous in the $W^{1,2}$ norm on every relatively compact coordinate neighborhood; a partition of unity gives the same statement on a compact support. Hence
	\[
	0\le-\int_M\langle\nabla w,\nabla(\chi_R^2w)\rangle\,d\mu_g,
	\]
	which yields the Caccioppoli inequality
	\[
	\int_M\chi_R^2|\nabla w|^2\,d\mu_g
	\le4\int_Mw^2|\nabla\chi_R|^2\,d\mu_g
	\le\frac{16}{R^2}\int_M w^2\,d\mu_g.
	\]
	In the finite-volume case the last integral is bounded by $\Vol_g(M)$. 
	On every fixed compact set, the gradient therefore vanishes in $L^2$ as $R\to\infty$. Thus, $w$ is constant almost everywhere. If this value is $c$, the submean inequality in coordinate balls gives $w\le c$, while upper semicontinuity and density of the full-measure set $\{w=c\}$ give $w\ge c$. Hence $w$ is everywhere constant.
\end{proof}

\begin{lemma}\label{lem:envelope}
	Let $\{u_\alpha\}$ be a nonempty family of PSH functions on a domain, all taking values in $[-1,0]$. If $v=\sup_\alpha u_\alpha$, then
	\[
	V(z)=v^*(z):=\limsup_{y\to z}v(y)
	\]
	is PSH and takes values in $[-1,0]$.
\end{lemma}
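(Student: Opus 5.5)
The plan is to use the classical fact that the upper semicontinuous regularization of the upper envelope of a locally uniformly bounded above family of plurisubharmonic functions is plurisubharmonic, and to check the value range directly. First, the range. Since every $u_\alpha$ takes values in $[-1,0]$ and the family is nonempty, $v=\sup_\alpha u_\alpha$ is a well-defined function with $-1\le v\le 0$ everywhere. The regularization $V=v^*$ satisfies $V\ge v\ge-1$ pointwise, because the $\limsup$ in the definition is understood to include the point $y=z$ (equivalently, $v^*$ is the smallest upper semicontinuous majorant of $v$). Moreover $V\le 0$, because $v\le0$ on the whole domain. Hence $V$ takes values in $[-1,0]$, and in particular $V\not\equiv-\infty$.

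Next, upper semicontinuity. By construction, $V(z)=\lim_{r\to0}\sup_{B(z,r)}v$. Suppose $V(z)<t$. Then $\sup_{B(z,r)}v<t$ for some $r$, and so $V<t$ on $B(z,r/2)$. Thus $V$ is upper semicontinuous.

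The main step is the sub-mean value property along complex lines. Fix $a$ and $b$ with the closed disc $\{a+\zeta b:|\zeta|\le1\}$ contained in the domain. For each $\alpha$ and each $y$ near $a$, the plurisubharmonicity of $u_\alpha$ gives
\[
u_\alpha(y)\le\frac1{2\pi}\int_0^{2\pi}u_\alpha(y+e^{i\theta}b)\,d\theta\le\frac1{2\pi}\int_0^{2\pi}V(y+e^{i\theta}b)\,d\theta .
\]
Taking the supremum over $\alpha$ gives $v(y)\le I(y)$, where $I(y)$ denotes the right-hand integral. It remains to pass to the $\limsup$ as $y\to a$. Since $V$ is upper semicontinuous and bounded above by $0$, Fatou's lemma applied to $-V$ yields
\[
\limsup_{y\to a}I(y)\le\frac1{2\pi}\int_0^{2\pi}\limsup_{y\to a}V(y+e^{i\theta}b)\,d\theta\le\frac1{2\pi}\int_0^{2\pi}V(a+e^{i\theta}b)\,d\theta .
\]
Hence $V(a)\le\frac1{2\pi}\int_0^{2\pi} V(a+e^{i\theta}b)\,d\theta$. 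Here the nearby circles stay in the domain by compactness, and the uniform upper bound $0$ justifies Fatou.

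This is the only delicate point: the interchange of the $\limsup$ with the integral requires the uniform upper bound $V\le0$, which is supplied by hypothesis. Together with upper semicontinuity and $V>-\infty$, the sub-mean value inequality shows that $V$ is PSH. Alternatively, one could simply cite the standard Lelong envelope theorem (e.g.\ Klimek, Theorem 2.9.14), with the range check as above.
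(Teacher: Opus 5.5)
Your proposal is correct and follows essentially the same route as the paper: the sub-mean value inequality for each $u_\alpha$ on circles centered at points $y$ near $a$ is bounded by the integral of $V$, and reverse Fatou (justified by $V\le 0$) together with upper semicontinuity of $V$ passes this to the center $a$. The only cosmetic difference is that you take the supremum over $\alpha$ before the $\limsup$ in $y$, whereas the paper picks a sequence $(z_\nu,\alpha_\nu)$ with $u_{\alpha_\nu}(z_\nu)\to V(z)$; you also spell out the range and upper-semicontinuity checks that the paper only states.
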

\begin{proof}
	The upper semicontinuous regularization $V(z)=v^*(z)$ is bounded and upper semicontinuous.
  Fix a complex disc $z+\zeta\xi$, $|\zeta|\le r$, compactly contained in the domain. Choose $z_\nu\to z$ and indices $\alpha_\nu$ such that $u_{\alpha_\nu}(z_\nu)\to V(z)$. For $\nu$ sufficiently large, the submean inequality on the parallel discs centered at $z_\nu$ gives
	\[
	u_{\alpha_\nu}(z_\nu)\le\frac1{2\pi}\int_0^{2\pi}V(z_\nu+re^{it}\xi)\,dt.
	\]
	Applying reverse Fatou Lemma to these uniformly bounded-above functions and using the upper semicontinuity of $V$,
	we obtain
	\[
	V(z)\le\frac1{2\pi}\int_0^{2\pi}V(z+re^{it}\xi)\,dt.
	\]
	Thus $V$ is subharmonic on every complex line and hence PSH.
\end{proof}

The finite volume Liouville lemma turns a PSH peak point into an orbit accumulation point as follows.
\begin{proposition}\label{prop:orbit}
	Let $D\subset\C^n$ be a bounded domain. Suppose taht a discrete group $\Gamma\le\Aut(D)$ acts freely and preserves a complete K\"ahler metric whose quotient has finite volume. If $p\in\partial D$ admits a function $u$ as in Lemma~\ref{lem:peak}, then for every $q\in D$ there are $\gamma_\nu\in\Gamma$ with $\gamma_\nu q\to p$.
\end{proposition}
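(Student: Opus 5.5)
We argue by contradiction using the orbit envelope of the peak function. Fix $q\in D$ and suppose no sequence $\gamma_\nu q$ converges to $p$. Then there is $\varepsilon>0$ with $|\gamma q-p|\ge\varepsilon$ for all $\gamma\in\Gamma$. Let $u$ be the function of Lemma~\ref{lem:peak} and set $s:=\sup_{D\setminus B(p,\varepsilon)}u<0$, so that $u(\gamma q)\le s$ for every $\gamma$.

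Consider the $\Gamma$-invariant family $\{u\circ\gamma:\gamma\in\Gamma\}$. It consists of PSH functions with values in $[-1,0)$. Let $v=\sup_\gamma u\circ\gamma$ and $V=v^*$. By Lemma~\ref{lem:envelope}, $V$ is PSH with values in $[-1,0]$. Since the family is permuted by $\Gamma$, the function $v$ is $\Gamma$-invariant. Each $\gamma$ is a homeomorphism, so the upper regularization commutes with composition and $V$ is $\Gamma$-invariant as well. Hence $V$ descends to a bounded function on $\Gamma\backslash D$. Because $\Gamma$ acts freely and discretely, the quotient is a complex manifold and the projection is a local biholomorphism, so the descended function is plurisubharmonic there. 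The invariant metric also descends to a complete K\"ahler metric of finite volume on the connected quotient. Lemma~\ref{lem:liouville} then shows that $V$ is a constant $c$.

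It remains to show that $c$ must be both $0$ and strictly negative. First, $V\ge u$ and $u(z)\to0$ as $z\to p$. For each $t<0$ pick $z$ with $u(z)>t$; then $c=V(z)\ge u(z)>t$, so $c=0$. Second, $v(q)=\sup_\gamma u(\gamma q)\le s<0$. This alone does not bound $V(q)$, since $V$ is the upper regularization and $V(q)$ could exceed $v(q)$. We use continuity of $u$ in a neighbourhood of $q$.

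This is the main step. Let $y$ be near $q$. We have $u(\gamma y)>s/2$ only if $\gamma y$ lies in the region $\{u>s/2\}$, which is contained in $B(p,\varepsilon')$ for some small $\varepsilon'$. If $\varepsilon'$ is chosen smaller than $\varepsilon/2$, this forces $|\gamma y-p|<\varepsilon/2$.

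Each $\gamma$ is a Kobayashi isometry, so $k_D(\gamma y,\gamma q)=k_D(y,q)$, which is small for $y$ near $q$. Points at small Kobayashi distance from $\gamma q$ stay Euclidean-close to it only if they are not near the boundary, so the needed uniformity near $p$ must come from elsewhere. It comes from the peak function itself. The function $u$ is negative PSH, so $\log(-u)$ is controlled along Kobayashi discs by a Harnack-type inequality: $-u$ is a positive superharmonic function on holomorphic discs. Concretely, $y$ and $q$ can be joined by a chain of analytic discs of bounded hyperbolic size. Applying this to $u\circ\gamma$ gives
\[
-u(\gamma y)\ge C^{-1}\bigl(-u(\gamma q)\bigr)\ge C^{-1}|s|,
\]
with $C$ depending only on $k_D(y,q)$.

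Hence $v\le -|s|/C$ on a neighbourhood of $q$, so $V(q)<0$. This contradicts $V\equiv0$, and therefore some sequence $\gamma_\nu q$ accumulates at $p$.
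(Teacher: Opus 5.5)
Everything up to $V\equiv0$ matches the paper: the orbit envelope, invariance of $v^*$, descent to the quotient, the Liouville lemma, and the value $0$. The contradiction set-up is also fine. The gap is in the step you call the main step. The Harnack-type inequality $-u(\gamma y)\ge C^{-1}\bigl(-u(\gamma q)\bigr)$, with $C$ depending only on $k_D(y,q)$, is false for negative plurisubharmonic functions. Harnack's inequality holds for positive \emph{harmonic} functions, not for positive superharmonic ones. For instance, $w(\zeta)=\max\{\log|\zeta|,-M\}/M$ is subharmonic on $\mathbb D$ with values in $[-1,0)$ and $w(0)=-1$, yet $-w(1/2)=\log 2/M\to0$ as $M\to\infty$. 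So no constant depending only on the hyperbolic distance from $0$ to $1/2$ can work, and chaining discs does not change this. Passing to $\log(-u)$ does not help either, since it is only plurisuperharmonic. As written, the bound $v\le-|s|/C$ near $q$ is therefore unjustified.

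The idea you dismissed is exactly what closes the argument. Your claim that points at small Kobayashi distance from $\gamma q$ are Euclidean-close only away from the boundary has the implication reversed. If $D\subset B(0,R_0)$, then $k_D\ge k_{B(0,R_0)}$ and $\kappa_{B(0,R_0)}(z;v)\ge|v|/R_0$ give $|x-y|\le R_0k_D(x,y)$ uniformly on $D$, including near $\partial D$. It is the converse, Euclidean-close implying Kobayashi-close, that degenerates at the boundary.

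Now take $y$ close enough to $q$ that $R_0k_D(y,q)<\varepsilon/2$; this holds near $q$, e.g. by comparison with a Euclidean ball about $q$ inside $D$. Then $|\gamma y-\gamma q|<\varepsilon/2$, hence $|\gamma y-p|\ge\varepsilon/2$ for every $\gamma\in\Gamma$. So $v\le\sup_{D\setminus B(p,\varepsilon/2)}u<0$ on a neighbourhood of $q$, and $V(q)<0$, which is the desired contradiction.

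This is the paper's argument in contrapositive form. The paper picks $q_\nu\to q$ and $\gamma_\nu$ with $u(\gamma_\nu q_\nu)\to0$, so that $\gamma_\nu q_\nu\to p$. It then uses $|\gamma_\nu q_\nu-\gamma_\nu q|\le R_0k_D(q_\nu,q)\to0$.
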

\begin{proof}
	Set
	\[
	v(z)=\sup_{\gamma\in\Gamma}u(\gamma z),\qquad V=v^*.
	\]
	Then Lemma~\ref{lem:envelope} gives $V\in\PSH(D)$ and $-1\le V\le0$.
	For every \(\delta\in\Gamma\), right multiplication by \(\delta\) permutes \(\Gamma\), so \(v\circ\delta=v\).
	Since $\delta$ is a homeomorphism,
\[
V(\delta z)=\limsup_{y\to\delta z}v(y)=\limsup_{x\to z}v(\delta x)=V(z).
\]
   Thus $V$ is invariant and descends to a bounded PSH function on the quotient \(\Gamma\backslash D\). By Lemma~\ref{lem:liouville}, it is constant. Since $V\ge v\ge u$ and $\sup_Du=0$, while $V\le0$, the constant is $0$.

	Fix \(q\in D\). From \(v^*(q)=0\), choose \(q_\nu\to q\) such that \(v(q_\nu)>-1/\nu\), and then choose \(\gamma_\nu\in\Gamma\) with \(u(\gamma_\nu q_\nu)>-2/\nu\). The uniform negative bound away from \(p\) in Lemma~\ref{lem:peak} forces \(\gamma_\nu q_\nu\to p\).
	
	To replace \(q_\nu\) by \(q\), choose \(R_0>0\) with \(D\subset B(0,R_0)\). For the Kobayashi distance, normalized so that \(\kappa_{\B^n}(0;v)=|v|\), monotonicity and the ball metric give
	\begin{equation}\label{eq:ballcontrol}
		|x-y|\le R_0 k_{B(0,R_0)}(x,y)\le R_0k_D(x,y).
	\end{equation}
	The first inequality follows because the infinitesimal ball metric is at least \(|v|/R_0\). Also \(k_D(q_\nu,q)\to0\), for example by comparing with a fixed Euclidean ball centered at \(q\) and contained in \(D\). Automorphism invariance and \eqref{eq:ballcontrol} now imply
	\[
	|\gamma_\nu q_\nu-\gamma_\nu q|\le R_0k_D(q_\nu,q)\to0.
	\]
	Hence \(\gamma_\nu q\to p\). 
\end{proof}

\section{The proofs of main theorems}\label{sect3}

To prove Theorem \ref{fv},  we first prove the following Lemma.

\begin{lemma}\label{lem:split}
Let
\[
S=\mathcal D(\R_+^k,H)=\{(z,w)\in\C^k\times\C^m:
\Imag z_j>H_j(w,w),\ 1\le j\le k\},
\]
where $H_j\ge0$ and $\sum_jH_j$ is positive definite on $\C^m$.
When $m=0$, the latter condition is vacuous. Suppose $S$ admits a
complete $\Aut(S)$-invariant K\"ahler metric $g$, and that there exists a discrete group \(\Lambda\le\Aut(S)\) acting freely such that
$\Vol_g(\Lambda\backslash S)<\infty$. Then
\[
S\simeq\prod_{j=1}^k\B^{n_j},\quad\sum_jn_j=k+m.
\]
\end{lemma}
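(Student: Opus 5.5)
The plan is to reduce the lemma to a structural statement about the forms $H_j$. Specifically, I would show that $\C^m$ splits as $W_1\oplus\cdots\oplus W_k$ with $\dim W_j=n_j-1$, that $H_j$ depends only on the $W_j$-component, and that $H_j$ is positive definite there. A complex linear change of the $w$-variables then turns $S$ into $\mathcal E(n_1,\dots,n_k)$, which is biholomorphic to $\prod_j\B^{n_j}$. This splitting also forces $\sum_j n_j=k+m$. I do not expect a direct appeal to Lemma \ref{PCL} to finish the proof, since it only embeds $S$ as a \emph{subdomain} of the product. It does, however, supply a bounded realization $S'\subset\prod_j\B^{n_j}$, to which the finite-volume tools of Section \ref{sec2} apply after transporting $g$ and $\Lambda$.

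First I would record the obvious affine automorphisms of $S$. These are the real translations $z\mapsto z+t$ with $t\in\R^k$, the Heisenberg maps $(z,w)\mapsto(z+2iH(w,a)+iH(a,a),\,w+a)$, and the dilations $(z,w)\mapsto(\lambda z,\sqrt\lambda\, w)$ with $\lambda>0$. The group $G$ they generate acts on $S$ with orbit space identified with the slice $\{y\in\R_+^k\}/\R_+$, which has dimension $k-1$. Hence $S$ is homogeneous exactly when there are additional automorphisms realizing the independent dilations $z_j\mapsto\lambda_jz_j$. A linear-algebra computation shows that such maps, combined with suitable linear maps in $w$, exist precisely when $H$ splits as above. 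So the goal becomes: \emph{prove $S$ is homogeneous}, or at least that $\Aut(S)$ contains the full torus $\R_+^k$ of partial dilations.

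Second, I would run the argument of Proposition \ref{prop:orbit} on $S'$. For every bounded PSH function $u$ on $S'$, the regularized envelope of $\sup_{\gamma\in\Lambda}u\circ\gamma$ is a bounded $\Lambda$-invariant PSH function. By Lemma \ref{lem:liouville} it is therefore constant, equal to $\sup u$. I would apply this to peak functions at the points of $\partial S'$ corresponding to the ``corners at infinity'' of $S$: the boundary directions where only some of the coordinates $\Imag z_j$ tend to $0$ or $\infty$, constructed as in Lemma \ref{lem:peak} from the local face functions. The consequence is that $\Lambda$-orbits of a fixed point accumulate at each of these corners. Then I would rescale along such orbit sequences in the spirit of Kodama's Theorem \ref{thm:kodama} (Pinchuk scaling in the $z_j$ separately), and use normal families to extract a limit automorphism. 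The aim is to show that the closure of $\Aut(S)$ contains one-parameter groups acting as dilations in a single $z_j$. Since $\Aut(S)$ is a Lie group, closed in the compact-open topology, these limits lie in $\Aut(S)$. That gives the torus and, by the first step, the splitting of $H$.

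The main obstacle is this scaling step. I need to show that limits along orbits accumulating at a partial corner produce genuinely new automorphisms, namely partial dilations, rather than elements of $G$, and that these limits are nondegenerate. The first thing I would try is to normalize the Kobayashi distance using \eqref{eq:ballcontrol} so that the rescaled maps stay in a compact family. I would then identify the limit domain with $S$ itself, using that $S$ is already a model Siegel domain, as in Kodama's realization theorem. If that fails, the fallback is to show that $\Aut(S)$-orbits cannot be of dimension less than $2(k+m)$. For this I would build, from a nonsplit $H$, a nonconstant bounded PSH function on $S'$ that is invariant under the identity component of $\Aut(S)$, and hence under a finite-index part of $\Lambda$. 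Lemma \ref{lem:liouville} would then contradict its nonconstancy.
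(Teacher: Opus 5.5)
There is a genuine gap. Your endpoint is fine: homogeneity of $S$ implies that $S$ is a product. The paper also finishes this way, citing Kodama's result that a homogeneous $\mathcal D(\R^k_+,H)$ is a product of Siegel half-spaces and then applying Cayley transforms. The gap is in the step meant to prove homogeneity, and neither of your routes can close it.

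The only consequence of finite volume you use is the orbit-envelope property behind Proposition~\ref{prop:orbit}: for every bounded PSH $u$, the regularized $\sup_{\gamma\in\Lambda}u\circ\gamma$ is constant. This property holds verbatim for the non-discrete group $G$ generated by real translations, Heisenberg maps and $(z,w)\mapsto(\lambda z,\sqrt\lambda\,w)$. It holds for \emph{every} $H$, split or not. To see this, note that a bounded $G$-invariant PSH function has the form $V(z,w)=f(\Imag z-H(w,w))$. Here $f$ is convex on $\R^k_+$ (restrict to $w=0$ and use translation invariance) and satisfies $f(\lambda y)=f(y)$. A bounded convex function is nonincreasing along every $y'\in\R^k_+$, so $f(y)\ge f(y+ty')=f(y/t+y')\to f(y')$. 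By symmetry, $f$ is constant.

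This has two consequences for your plan.
\begin{enumerate}
\item[(i)] Any orbit accumulation you extract from Lemma~\ref{lem:liouville} and peak functions is already realized by $G$-orbits. Along these orbits the class of $\Imag z-H(w,w)$ in $\R^k_+/\R_+$ is constant. So you can never force accumulation at the ``partial corners'' where this class degenerates, which is exactly the input your scaling step needs. In fact, no bounded PSH function peaks only along such sequences. Moreover, Lemma~\ref{lem:peak} cannot be invoked there, since nothing like (C.1)--(C.5) is known at the images in $\partial S'$ of points at infinity.
\item[(ii)] Your fallback is impossible. No nonconstant bounded PSH function is invariant under $\Aut_0(S)\supseteq G$, whatever $H$ is.
\end{enumerate}

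The missing idea is to use the lattice group-theoretically rather than through PSH functions. By \cite[Lemma~3.1]{KLW26}, $\Lambda\cap\Aut_0(S)$ is a lattice in $G_0=\Aut_0(S)$. Hence $G_0$ is unimodular and $\operatorname{tr}\operatorname{ad}X=0$ for all $X\in\mathfrak g$. Take the Euler field $E$ and the grading $\mathfrak g=\mathfrak g_{-1}\oplus\mathfrak g_{-1/2}\oplus\mathfrak g_0\oplus\mathfrak g_{1/2}\oplus\mathfrak g_1$. The dimensions satisfy $\dim\mathfrak g_{-1}=k$, $\dim\mathfrak g_{-1/2}=2m$, $\dim\mathfrak g_{1/2}\le2m$ and $\dim\mathfrak g_1\le k$. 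The trace condition for $E$ reads $0=-k-m+\frac12\dim\mathfrak g_{1/2}+\dim\mathfrak g_1$, so both upper bounds are attained. Vey's argument \cite{Vey} then shows that $\mathfrak g$ spans the tangent space at each $(iy,0)$, so $G_0$ acts transitively.

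This is precisely what distinguishes $\Lambda$ from $G$. The Lie algebra of $G$ is $\mathfrak g_{-1}\oplus\mathfrak g_{-1/2}\oplus\R E$, on which $\operatorname{tr}\operatorname{ad}E=-(k+m)\ne0$. So $G$ has the PSH-Liouville property but admits no lattice.
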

\begin{proof}
	If $m=0$, then $S$ is the product of $k$ upper half-planes,
	hence is biholomorphic to $\mathbb D^k$. So we may assume
	$m\ge1$.
	
	By Lemma~\ref{PCL}, $S$ admits a bounded realization.
	Transporting $g$ and $\Lambda$ via the realizing biholomorphism
	preserves completeness, invariance, discreteness, freeness,
	and quotient volume. It also identifies the automorphism
	groups and their identity components.
	
	Set $G_0=\Aut_0(S)$. The lattice lemma
	\cite[Lemma~3.1]{KLW26}, which applies to a complete invariant
	K\"ahler metric, yields that $\Lambda_0=\Lambda\cap G_0$
	is a lattice in $G_0$. Consequently, $G_0$ is unimodular.
	
	We apply the Lie algebra argument in the proof of
	\cite[Proposition~6.2]{Vey}.
	For $\mathfrak g=\operatorname{Lie}(G_0)$, the standard
	grading associated with the Euler field
	\[
	E=\sum_{j=1}^k z_j\frac{\partial}{\partial z_j}
	+\frac12\sum_{\alpha=1}^m
	w_\alpha\frac{\partial}{\partial w_\alpha}
	\]
	is
	\[
	\mathfrak g
	=\mathfrak g_{-1}\oplus\mathfrak g_{-1/2}
	\oplus\mathfrak g_0\oplus\mathfrak g_{1/2}
	\oplus\mathfrak g_1.
	\]
	Its real dimensions satisfy
	\[
	\dim\mathfrak g_{-1}=k,\quad
	\dim\mathfrak g_{-1/2}=2m,\quad
	\dim\mathfrak g_{1/2}\le2m,\quad
	\dim\mathfrak g_1\le k.
	\]
	Unimodularity implies
	\[
	0=\operatorname{tr}(\operatorname{ad}E)
	=-k-m+\frac12\dim\mathfrak g_{1/2}
	+\dim\mathfrak g_1,
	\]
	so both upper bounds are attained.
	Vey's argument then shows that $\mathfrak g$
	spans the real tangent space at each $(iy,0)$,
	$y\in\mathbb R_+^k$. Real and Heisenberg translations in
	$G_0$ move every point of $S$ to such a point.
	Hence every $G_0$-orbit is open. Since $S$ is connected,
	$G_0$ acts transitively on $S$.
	
	For any $x\in S$, the compact set $K=\{x\}$ therefore
	satisfies $\Aut(S)K=S$. The proposition in the
	proof of \cite[Corollary~1]{K86} yields a
	biholomorphism
	\[
	S\simeq
	\prod_{j=1}^k
	\left\{(\zeta_j,u_j)\in\mathbb C\times\mathbb C^{n_j-1}:
	\Imag\zeta_j>\|u_j\|^2\right\},
	\quad n_j\ge1,\quad \sum_jn_j=k+m.
	\]
	
	For each factor, the Cayley map is
	\[
	(\zeta,u)\longmapsto
	\left(\frac{\zeta-i}{\zeta+i},\frac{2u}{\zeta+i}\right).
	\]
	The identity
	\[
	|\zeta+i|^2-|\zeta-i|^2-4\|u\|^2
	=4(\Imag\zeta-\|u\|^2)
	\]
	shows that it maps into $\mathbb B^{n_j}$.
	Conversely, for $(a,b)\in\mathbb B^{n_j}$, set
	\[
	\zeta=i\frac{1+a}{1-a},\quad
	u=\frac{ib}{1-a}.
	\]
	These expressions are well-defined and satisfy
	\[
	\Imag\zeta-\|u\|^2
	=\frac{1-|a|^2-\|b\|^2}{|1-a|^2}>0.
	\]
	They define the holomorphic inverse, proving surjectivity.
	This includes $n_j=1$, where the $u$-coordinate is absent.
\end{proof}

\begin{thm}
Under the hypotheses of Theorem~\ref{fv},
$D\simeq\prod_{j=1}^k\B^{n_j}$ for positive integers $n_j$
with $\sum_jn_j=n$.
\end{thm}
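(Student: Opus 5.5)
The plan is to chain the results already assembled: peak function, orbit accumulation, Kodama's realization, and then Lemma~\ref{lem:split}.

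\textbf{Orbit accumulation at $p$.} Let $p\in\partial D$ be the boundary point satisfying (C.1)--(C.5) with $k$ local defining functions. By Lemma~\ref{lem:peak} there is a continuous PSH peak function $u$ at $p$ with $-1\le u<0$. The group $\Gamma$ is discrete, acts freely, and preserves the complete K\"ahler metric $g$ (which is $\Aut(D)$-invariant), and $\Vol_g(\Gamma\backslash D)<\infty$. So Proposition~\ref{prop:orbit} applies: for a fixed $q\in D$ there are $\gamma_\nu\in\Gamma\subset\Aut(D)$ with $\gamma_\nu q\to p$. Taking $K=\{q\}$, $k_\nu=q$ and $f_\nu=\gamma_\nu$ verifies condition $(*)$ of Theorem~\ref{thm:kodama}. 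That theorem gives a biholomorphism $\Phi\colon D\to S=\mathcal D(\R_+^k,H)\subset\C^k\times\C^{n-k}$, with the same $k$ and with $H$ an $\R_+^k$-hermitian form. By the remarks after the definition of Siegel domains, each $H_j\ge0$ and $\sum_j H_j>0$ whenever $m=n-k\ge1$.

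\textbf{Transport to $S$ and conclusion.} Push the structure forward along $\Phi$: set $g_S=(\Phi^{-1})^*g$ and $\Lambda=\Phi\Gamma\Phi^{-1}\le\Aut(S)$. Conjugation by $\Phi$ is an isomorphism $\Aut(D)\to\Aut(S)$, so $g_S$ is a complete $\Aut(S)$-invariant K\"ahler metric. The group $\Lambda$ is discrete (using the compact-open topology, which conjugation preserves) and acts freely. Moreover $\Phi$ induces an isometry $\Gamma\backslash D\to\Lambda\backslash S$, so $\Vol_{g_S}(\Lambda\backslash S)<\infty$. Lemma~\ref{lem:split} then gives $S\simeq\prod_{j=1}^k\B^{n_j}$ with $\sum_j n_j=k+m=n$ and each $n_j\ge1$. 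Composing with $\Phi$ yields $D\simeq\prod_{j=1}^k\B^{n_j}$.

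\textbf{Main obstacle.} The substantive difficulties have been absorbed into Proposition~\ref{prop:orbit} and Lemma~\ref{lem:split}. What remains is bookkeeping, and the one point to check is that the number of factors equals the number $k$ of local defining functions. This holds because Kodama's realization produces the cone $\R_+^k$ with exactly that $k$, and Lemma~\ref{lem:split} keeps the number of factors equal to the dimension of the cone. I would state this explicitly rather than rely on a dimension count alone. I would also note that the hypothesis of Proposition~\ref{prop:orbit} only needs $\Gamma$-invariance of $g$, which follows from $\Aut(D)$-invariance, so no extra assumption is required.
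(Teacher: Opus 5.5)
Your proposal is correct and follows the paper's proof essentially step for step. You chain the peak function of Lemma~\ref{lem:peak}, the orbit accumulation of Proposition~\ref{prop:orbit}, Kodama's realization with $K=\{q\}$, and the transport of $g$ and $\Gamma$ along the biholomorphism to invoke Lemma~\ref{lem:split}; your remark that the cone dimension $k$ is preserved throughout is a worthwhile explicit check.
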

\begin{proof}
Let $p$ and $\rho_1,\ldots,\rho_k$ satisfy (C.1)--(C.5).
By Lemmas~\ref{lem:peak}, $p$ admits a bounded plurisubharmonic
peak function. Then using Proposition~\ref{prop:orbit}, for any fixed $q\in D$, there exist
$\gamma_\nu\in\Gamma$ with $\gamma_\nu(q)\to p$.

Theorem~\ref{thm:kodama} applies with the compact set $K=\{q\}$.
It provides a biholomorphism $F:D\to S=\mathcal D(\R_+^k,H)$,
where $H$ is nondegenerate and $m=n-k$.
Define $g_S=(F^{-1})^*g$ and
$\Lambda=F\Gamma F^{-1}$. The fact that completeness, group is discrete and acting freely, and quotient volume are preserved by $F$.
Moreover, for $a\in\Aut(S)$, the automorphism $F^{-1}aF$ of $D$
preserves $g$, so $a^*g_S=g_S$. Thus all hypotheses of
Lemma~\ref{lem:split} hold. Its conclusion proves the theorem.
\end{proof}

\begin{proof}[Proof of Theorem~\ref{ab}]
The boundary point existence result in Section~\ref{sec2} gives a point \(p\in\partial D\) satisfying (C.1)--(C.5). By existence and uniqueness, the canonical K\"{a}hler--Einstein metric is complete and \(\Aut(D)\)-invariant. The discrete free action and the finite-volume assumption are precisely those in Theorem~\ref{fv}, which therefore yields the asserted product of balls.
\end{proof}

We now prove Theorem \ref{thm:flat} without using Theorem \ref{ab}.

\begin{thm}
Under the hypotheses of Theorem~\ref{thm:flat}, $D\simeq\mathbb D^n$.
\end{thm}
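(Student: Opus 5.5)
We prove Theorem~\ref{thm:flat} by running the argument for Theorem~\ref{fv} directly and checking that every local face is Levi flat, so that all balls collapse to discs.

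\emph{Step 1: find a good corner point.} A domain with piecewise Levi flat boundary is locally pseudoconvex from each face. It is therefore pseudoconvex, being locally an intersection of pseudoconvex pieces. Pinchuk's existence result \cite[Lemma 1.2--1.3]{pinchuk} then gives a point $p\in\partial D$ satisfying (C.1)--(C.5). The face functions $\rho_1,\dots,\rho_k$ are selected among the original defining functions, so each hypersurface $\{\rho_i=0\}$ is Levi flat.

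\emph{Step 2: realize $D$ as a Siegel domain.} The canonical K\"ahler--Einstein metric is complete and $\Aut(D)$-invariant. Lemma~\ref{lem:peak} and Proposition~\ref{prop:orbit} give $\gamma_\nu\in\Gamma$ with $\gamma_\nu q\to p$. Theorem~\ref{thm:kodama} then yields $D\simeq\mathcal D(\R_+^k,H)$ with $m=n-k$.

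\emph{Step 3: show $m=0$.} This is where Levi flatness enters. In Kodama's scaling construction the model is the limit of the rescaled face functions. The rescalings are affine in $z$ and anisotropic of weight $1/2$ in $w$, and they are composed with normalizing polynomial changes of coordinates. Under them the Levi form of $\rho_j$ restricted to $\bigcap_i\ker\partial\rho_i(p)$ becomes exactly $H_j(w,w)$ in the limit face $\{\Imag z_j=H_j(w,w)\}$.

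Each face $\{\rho_j=0\}$ is Levi flat at $p$. Its Levi form therefore vanishes on $\ker\partial\rho_j(p)$, which contains $\bigcap_i\ker\partial\rho_i(p)$. Hence every $H_j\equiv0$. But $\sum_jH_j$ must be positive definite on $\C^m$, so $m=0$ and $k=n$.

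The main obstacle is making this identification of $H_j$ with the restricted Levi form precise. I would first try to extract it from Kodama's proof. If that fails, I would argue intrinsically instead. Levi flatness of the $j$-th face persists along the scaling limit. The limit face $\{\Imag z_j=H_j(w,w)\}$ is Levi flat only if the Levi form $H_j$ vanishes on its complex tangent, which contains $\{0\}\times\C^m$. So again $H_j=0$.

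\emph{Step 4: conclude.} With $m=0$, $\mathcal D(\R_+^n)$ is a product of $n$ upper half-planes. This is biholomorphic to $\mathbb D^n$ via the Cayley map in each factor, exactly as in the $m=0$ case of Lemma~\ref{lem:split}. No volume or Lie algebra argument is needed here, which is why Theorem~\ref{ab} is not used.
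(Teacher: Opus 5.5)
Your Steps 1, 2 and 4 match the paper's proof. The difference is in Step 3, where you show there are no $w$-variables, and as written that step is not supported. You try to get $m=0$ from the model side, by identifying $H_j$ with the restriction of $\cL_{\rho_j}(p;\cdot,\cdot)$ to $\bigcap_i\ker\partial\rho_i(p)$ through Kodama's scaling. As you note yourself, Theorem~\ref{thm:kodama} as stated says nothing about how $H$ arises. Your fallback, that Levi flatness persists in a $C^2_{\rm loc}$ limit of rescaled faces, also needs the internal structure of Kodama's construction. So both versions require opening up Kodama's proof.

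The paper avoids this by working at $p$ directly. The Siegel domain from Theorem~\ref{thm:kodama} lives in $\C^k\times\C^{n-k}$, with $k$ equal to the number of face functions, so it suffices to show $k=n$. Take $v\in V=\bigcap_j\ker\partial\rho_j(p)$ and expand the function $\rho$ of (C.5) using $\rho_j(p)=0$:
\[
\cL_\rho(p;v,v)=\sum_j\cL_{\rho_j}(p;v,v)+2A\sum_j|\partial\rho_j(p)v|^2=0 .
\]
The first sum vanishes by Levi flatness, since $V\subset\ker\partial\rho_j(p)$. The second vanishes by the definition of $V$. Strict plurisubharmonicity then gives $V=\{0\}$. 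Condition (C.3) gives $\dim_{\C}V=n-k$, so $k=n$ and the model is a product of half-planes.

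This is the same mechanism as yours: the positivity $\sum_jH_j>0$ of the model is (C.5) restricted to $V$. The paper reads it off the defining functions rather than the scaling limit, so Kodama's theorem is used purely as a black box. Your route, once justified, would also identify $H$ with the restricted Levi forms, but that extra information is not needed. Replacing Step 3 by the computation above makes your proposal complete.
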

\begin{proof}
Note the domain is pseudoconvex, since it is  the intersection of domains with Levi flat boundary. 
This ensures the existence of the complete canonical
K\"ahler--Einstein metric  by \cite{MY83}.
The good corner point existence result consequently
supplies $p$ and $\rho_1,\ldots,\rho_k$ satisfying (C.1)--(C.5).
By Proposition \ref{prop:orbit} and Theorem \ref{thm:kodama}, \(D\)
is biholormorphic to a Siegel domain.

Set \(V=\bigcap_j\ker\partial\rho_j(p)\) and \(\rho=\sum_j\rho_j+A\sum_j\rho_j^2\) as in (C.5). For every \(v\in V\), Levi-flatness gives \(\cL_{\rho_j}(p;v,v)=0\) for all \(j\). Since \(\rho_j(p)=0\),
\[
\cL_\rho(p;v,v)=\sum_j\cL_{\rho_j}(p;v,v)
+2A\sum_j|\partial\rho_j(p)v|^2=0.
\]
Strict plurisubharmonicity of \(\rho\) forces \(v=0\). Condition (C.3) gives \(\dim_{\mathbb C}V=n-k\), so \(k=n\). Hence the Siegel domain is
\[
W=\{w\in\mathbb C^n:\operatorname{Re}w_i>0,\ i=1,\dots,n\},
\]
which is biholomorphic to \(\mathbb D^n\).
\end{proof}

\begin{proof}[Proof of Corollary~\ref{cor:smooth}]
	
Fix \(a\in D\) and choose \(p\in\overline D\) maximizing \(|p-a|\). Put \(R=|p-a|\). Then \(p\in\partial D\), \(D\subset B(a,R)\), and the sphere containing \(D\) is tangent to \(\partial D\) at \(p\). As in \cite[Lemma~4.1]{GS20}, comparison with this sphere shows that \(p\) is a strongly convex boundary point, hence strongly pseudoconvex. Moreover,
\[
\overline D\cap T_p\partial D
\subset \overline{B(a,R)}\cap T_p\partial B(a,R)=\{p\},
\]
so \(p\) is globally strongly convex. Here \(T_p\partial D\) denotes the real affine tangent hyperplane to \(\partial D\) at \(p\).

Let \(r\) be a local defining function at \(p\). Its Levi form is positive on \(\ker\partial r(p)\), so \(r+Ar^2\) is strictly PSH near \(p\) for sufficiently large \(A\ge0\). Thus (C.1)--(C.5) hold at \(p\) with \(k=1\).

In the Kähler--Einstein volume case, Theorem~\ref{fv} now applies with \(k=1\) and gives \(D\simeq\B^n\). In the Bergman volume case, the Bergman metric is a complete \(\Aut(D)\)-invariant Kähler metric by Ohsawa's theorem \cite{O81}, since \(D\) is bounded and pseudoconvex with \(C^2\)-smooth boundary. Theorem~\ref{fv}, applied to the same point with \(k=1\), again gives \(D\simeq\B^n\).

It remains to consider the Kobayashi--Eisenman volume case.	
	
Let \(s_D\) denote the squeezing function, namely the supremum of \(r\) for which there exists a holomorphic embedding \(f:D\to\B^n\) satisfying \(f(z)=0\) and \(r\B^n\subset f(D)\). The boundary estimate at globally strongly convex points in \cite{sque} yields a neighborhood \(U\) of \(p\) and \(c\in(0,1)\) such that
\begin{equation}\label{eq:squeezing-local}
	s_D(z)\ge c\quad(z\in D\cap U).
\end{equation}
We use the following pointwise consequence of the squeezing comparisons for metrics and intrinsic measures in \cite{sque}: on the set
\[
A_c=\{z\in D:s_D(z)\ge c\},
\]
there exists \(C=C(n,c)\ge1\) such that, for any two of the volume forms \(\mu_B,\mu_{KE},\mu_K\),
\begin{equation}\label{eq:squeezing-volumes}
	C^{-1}\mu_\beta\le\mu_\alpha\le C\mu_\beta
	\quad\text{on }A_c.
\end{equation}

The cited comparisons include a comparison between the latter two measures; fixed normalization constants are absorbed into \(C\). More explicitly, let \(\widetilde\mu_\kappa\) denote the measure whose density is the reciprocal of the Euclidean volume of the infinitesimal Kobayashi unit indicatrix, with a fixed dimensional normalization. The intrinsic-measure estimate compares \(\mu_K\) with \(\widetilde\mu_\kappa\) by powers of \(s_D\), whereas the metric estimate compares the norms of \(g_B\) and \(g_{KE}\) with \(\kappa_D\). Comparing their unit indicatrices yields the corresponding volume inequalities. 

On \(A_c\), all these factors are bounded in terms of \(n\) and \(c\) alone, proving \eqref{eq:squeezing-volumes} even before a global squeezing bound is known. Equivalently, for any \(z\in A_c\), one can choose an embedding with
\[
f(z)=0,\quad (c/2)\B^n\subset f(D)\subset\B^n.
\]
Using \(c/2\) avoids the assumption that the supremum defining \(s_D(z)\) is attained. Ball monotonicity for the intrinsic measures and the squeezing-controlled metric estimates yield constants independent of \(z\); the Jacobian of \(f\) cancels in ratios of the invariant volume densities.

Let \(\mu\) be the volume form for which \(\Vol_\mu(M)<\infty\), where \(M=\Gamma\backslash D\). We first produce the orbit accumulation without assuming that \(\Vol_{KE}(M)\) is finite. Since \(p\) is strongly pseudoconvex, Lemma~\ref{lem:peak} gives a PSH peak function \(u\) with \(-1\le u<0\), chosen so that \(u=-1\) outside \(U\). This follows by choosing the ball used in the construction of Lemma~\ref{lem:peak} to have closure contained in \(U\); its annular gluing region does not affect the peak at \(p\). Set
\[
v(z)=\sup_{\gamma\in\Gamma}u(\gamma z),\quad
V=v^*,\quad W=1+V.
\]

As in Proposition~\ref{prop:orbit}, the function \(W\) is \(\Gamma\)-invariant, plurisubharmonic, and satisfies \(0\le W\le1\). We claim that
\begin{equation}\label{eq:envelope-support}
	\{W>0\}\subset A_c.
\end{equation}
Indeed, if \(W(z)>0\), then \(V(z)>-1\), so by the definition of \(v^*\) there exist \(z_\nu\to z\) with \(v(z_\nu)>-1\). Choose \(\gamma_\nu\in\Gamma\) with \(u(\gamma_\nu z_\nu)>-1\). Then \(\gamma_\nu z_\nu\in U\), so \eqref{eq:squeezing-local} and the biholomorphic invariance of \(s_D\) give \(s_D(z_\nu)\ge c\). The continuity of \(s_D\) \cite[Theorem~3.1]{DGZ12} then proves \eqref{eq:envelope-support}.

Both \(A_c\) and the volume comparisons descend to \(M\). Consequently,
\[
\int_M W^2\,d\mu_{KE}
\le C\int_M W^2\,d\mu
\le C\Vol_\mu(M)<\infty.
\]
Here the first inequality is valid because \(W=0\) outside the image of \(A_c\), and no comparison on its complement is needed. We use the same symbols for invariant functions on \(D\) and their descended counterparts on \(M\). The complete Kähler--Einstein metric descends to a complete metric on \(M\), and \(M\) is connected. Thus the nonnegative \(L^2\) version of Lemma~\ref{lem:liouville} applies. Explicitly, its cutoff argument uses the finiteness of the integral of \(W^2\) instead of the finiteness of the total Kähler--Einstein volume:
\[
\int_M\chi_R^2|\nabla W|_{KE}^2\,d\mu_{KE}
\le\frac{16}{R^2}\int_M W^2\,d\mu_{KE}\longrightarrow0.
\]

Thus $W$ is constant. Since $V\ge u$, $\sup_Du=0$, and $V\le0$, this constant is $1$. The preceding integrability estimate consequently gives
\[
\Vol_{KE}(M)=\int_M W^2\,d\mu_{KE}<\infty.
\]
The canonical K\"ahler--Einstein metric is complete and invariant, and the point $p$ satisfies (C.1)--(C.5) with $k=1$. Theorem~\ref{fv} therefore gives $D\simeq\B^n$.
\end{proof}

\vspace*{3mm}
\noindent {\bf Funding}.
Xingsi Pu was supported by the National Natural Science Foundation of China (Grant No. 12501091), the Science and Technology Research Program of Chongqing Municipal Education Commission (Grant No. KJQN202501110), Chongqing Natural Science Foundation Innovation and Development Joint Fund (CSTB2025NSCQ-LZX0059), Scientific Research Foundation of the Chongqing University of Technology (No. 2024ZDZ027).

\bibliography{ref}
\bibliographystyle{plain}
\end{document}